\numberwithin{equation}{section}
\theoremstyle{plain}
\newtheorem{thrm}{Theorem}[section]
\newtheorem{lmm}[thrm]{Lemma}
\newtheorem{crllr}[thrm]{Corollary}
\newtheorem{prpstn}[thrm]{Proposition}
\newtheorem{dfntn}[thrm]{Definition}
\newtheorem{rmrk}[thrm]{Remark}
\numberwithin{equation}{section}
\renewcommand{\div}{\operatorname{div}}
\newcommand{\bY}{\mathbf{Y}}
\newcommand{\by}{\mathbf{y}}
\newcommand{\bz}{\mathbf{z}}
\newcommand{\bef}{\mathbf{f}}
\newcommand{\bg}{\mathbf{g}}
\newcommand{\be}{\mathbf{e}}
\newcommand{\p}{\mathfrak{p}}
\newcommand{\q}{\mathfrak{q}}
\newcommand{\bphi}{{\boldsymbol\phi}}
\newcommand{\bpsi}{{\boldsymbol\psi}}
\newcommand{\bL}{\mathbf{L}}
\newcommand{\bLd}{{\mathbf{L}^2(\Omega)}}
\newcommand{\bLqd}{{\mathbf{L}^2(Q)}}
\newcommand{\bLf}{{\mathbf{L}^4(\Omega)}}
\newcommand{\bLss}{{\mathbf{L}^s_\sigma(\Omega)}}
\newcommand{\bLscs}{{\mathbf{L}^{s'}_\sigma(\Omega)}}
\newcommand{\bLs}{{\mathbf{L}^{s}(\Omega)}}
\newcommand{\bWoT}{{\mathbf{W}(0,T)}}
\newcommand{\bWqpoT}{{\mathbf{W}_{q,p}(0,T)}}
\newcommand{\bWdpoT}{{\mathbf{W}_{2,p}(0,T)}}
\newcommand{\bWrsoT}{{\mathbf{W}_{r,s}(0,T)}}
\newcommand{\bV}{\mathbf{V}}
\newcommand{\bBrs}{{\mathbf{B}_{s,r}(\Omega)}}
\newcommand{\bBqp}{{\mathbf{B}_{p,q}(\Omega)}}
\newcommand{\bW}{{\mathbf{W}_{\!p}}}
\newcommand{\bWs}{{\mathbf{W}_s(\Omega)}}
\newcommand{\bWsc}{{\mathbf{W}_{s'}(\Omega)}}
\newcommand{\bWp}{{\mathbf{W}_p(\Omega)}}
\newcommand{\bWpc}{{\mathbf{W}_{\!p'}(\Omega)}}
\newcommand{\bWop}{{\mathbf{W}_0^{1,p}(\Omega)}}
\newcommand{\bWos}{{\mathbf{W}_0^{1,s}(\Omega)}}
\newcommand{\bWopc}{{\mathbf{W}_0^{1,p'}(\Omega)}}
\newcommand{\bWosc}{{\mathbf{W}_0^{1,s'}(\Omega)}}
\newcommand{\bWmop}{{\mathbf{W}^{-1,p}(\Omega)}}
\newcommand{\bWmos}{{\mathbf{W}^{-1,s}(\Omega)}}
\newcommand{\bH}{{\mathbf{H}}}
\newcommand{\bHo}{{\mathbf{H}_0^1(\Omega)}}
\newcommand{\bHmo}{{\mathbf{H}^{-1}(\Omega)}}
\newcommand{\Asq}{{A^{\frac{1}{2}}_s}}
\newcommand{\Asqc}{{A^{\frac{1}{2}}_{s'}}}
\newcommand{\bna}{\mathbf{\nabla}}
\newcommand{\mF}{\mathcal{F}}
\newcommand{\mA}{\mathcal A}
\newcommand{\mL}{\mathcal{L}}
\newcommand{\mY}{\mathcal{Y}}
\newcommand{\mG}{\mathcal{G}}
\newcommand{\mD}{\mathcal{D}}
\begin{document}

\title{Well-Posedness of Evolutionary Navier-Stokes Equations with Forces of Low Regularity on Two-Dimensional Domains\thanks{The first author was partially supported by Spanish Ministerio de Econom\'{\i}a y Competitividad under research project MTM2017-83185-P. The second was supported by the ERC advanced grant 668998 (OCLOC) under the EU’s H2020 research program.}}

\author{Eduardo Casas\thanks{Departmento de Matem\'{a}tica Aplicada y Ciencias de la Computaci\'{o}n, E.T.S.I. Industriales y de Telecomunicaci\'on, Universidad de Cantabria, 39005 Santander, Spain, {\tt eduardo.casas@unican.es}} \ and
Karl Kunisch\thanks{Institute for Mathematics and Scientific Computing, University of Graz, Heinrichstrasse 36, A-8010 Graz, Austria, {\tt karl.kunisch@uni-graz.at}}}

\maketitle

\begin{abstract}
\noindent Existence and uniqueness of solutions to the Navier-Stokes equations in dimension two with forces in the space $L^q( (0,T); \bWmop)$ for $p$ and $q$ in  appropriate parameter ranges are proven. The case of spatially measured-valued {forces} is included. For the associated Stokes equation the well-posedness results are verified in arbitrary dimensions {for any $1 < p, q < \infty$}.
\end{abstract}

\textbf{1991 Mathematics Subject Classifications.}
35B40, 
35Q30, 
76D07, 
76N10

\vspace{0.25cm}

\textbf{Keywords:} Evolution Navier-Stokes Equations, weak solutions, uniqueness clasess, sensitivity analysis, asymptotic stability

\begin{center}
{\em Dedicated to Prof. Dr. Enrique Zuazua on the occasion of his 60th birthday}
\end{center}

\section{Introduction}
\label{S1}

In this paper we investigate the following Navier-Stokes system
\begin{equation}
\left\{\begin{array}{l}\displaystyle\frac{\partial \by}{\partial t} -\nu\Delta\by + (\by \cdot \bna)\by + \nabla\p = \bef\ \text{ in } Q = \Omega \times I,\\[1.2ex]\div\by = 0 \ \text{ in } Q, \ \by = 0 \ \text{ on } \Sigma = \Gamma \times I,\ \by(0) = \by_0 \text{ in } \Omega,\end{array}\right.
\label{E1.1}
\end{equation}
with focus on low regularity assumptions on the inhomogeneity $\bef$.
Here, $I = (0,T)$ with $0 < T < \infty$, and $\Omega\subset \mathbb{R}^d$ denotes a connected bounded domain  with a $C^3$ boundary $\Gamma$.

Our interest in this problem is two-fold. First, it has received very little attention in the literature so far. Indeed the only result which we are aware of is given in  \cite{Serre1983}, where $\bef$ is chosen in $W^{1,\infty}(I; \bWmop)$, with $\bWmop=\bigotimes_{i=1}^d W^{-1,p}(\Omega)$,  $d\in \{2,3\}$,  and $p\in (\frac{d}{2}, 2]$. It is mentioned there, that likely the result is not optimal, and the natural question arises whether, and how, it can be improved. Secondly we are interested in control problems with sparsity constraints, subject to \eqref{E1.1} as constraint. In this case it is natural to demand that for almost every $t\in I$ the forcing function is a vector valued Borel measure, i.e. $ \bef(t,\cdot) \in\mathbf{M}(\Omega) =\bigotimes_{i=1}^d M(\Omega) $, where $M(\Omega)$ is the space of real and regular Borel measures in $\Omega$, see e.g. \cite{Casas-Kunisch2019} and the references on sparse control given there. To treat \eqref{E1.1} with spatially  measure valued controls it is natural to consider the space $\bWmop$ with $p\in[1,\frac{d}{d-1})$, since in this case $\mathbf{M}(\Omega)\subset \bWmop$. In dimension $3$ this requires to consider the space $\bWmop$ with $p\in[1,\frac{3}{2})$. However, even in the stationary case the  existence of a solution  is an open issue for $d=3$ and this range of values for $p$, see \cite{Kim2009} or \cite{Serre1983}. For this reason we restrict our attention to the case $d=2$ throughout  the paper, unless specifically mentioned otherwise. For the two-dimensional case the result in \cite{Serre1983} guarantees the existence of a solution to  \eqref{E1.1} for $\bef \in W^{1,\infty}(I; \mathbf{M}(\Omega))$. But this regularity requirement with respect to time is not practical for control theory purposes.

Thus the focus of our work is the investigation of \eqref{E1.1} for $\bef \in L^q(I;\bWmop)$ in the case $\Omega \subset \mathbb{R}^2$, and $p<2$.
For this purpose we also require results on the Stokes equation associated to \eqref{E1.1}. Surprisingly, even this case has not yet been analysed  for $\bef \in L^q(I;\bWmop)$.  We carry out such an analysis which will be independent of the spatial dimension $d$.

Before we start, let us summarize, very selectively, some relevant literature.
In the stationary case the investigation of the Navier-Stokes system with data less regular than  $ \bLqd$ dates back to \cite{Leray1933}, who considers the case $\bef\in \mathbf{W}^{-1,2}(\Omega)$. In  \cite{Serre1983} the range of admissible forcing functions is increased to $\bef \in \bWmop$, with $p\in(\frac{d}{2},2)$
For more recent results we refer to \cite{FGS2006}, \cite{GSS2005},   \cite{Kim2009}, and the references there.

For the evolutionary  system well-posedness for forcing functions in the Hilbert spaces $L^2(I;\bLd)$,  and $L^2(I;\mathbf{W}^{-1,2}(\Omega))$, with $d\in\{2,3\}$, is well understood, see e.g. \cite{Boyer-Fabrie2013}, or \cite{Temam79}. The  analysis of the Stokes problem associated to  \eqref{E1.1} with forcing functions in the Bochner spaces $L^q(I;\mathbf{L}^p(\Omega))$, with $1<p,q < \infty,$ has attracted much attention. We refer to \cite{HS2018} for an informative summary, including the development of the maximal regularity techniques for this scenario.  Well-posedness of the Navier-Stokes system with  $\bef \in L^q(I;\mathbf{L}^p(\Omega))$
has been investigated in \cite{Giga-Sohr81} or \cite{Tolks18}. As mentioned above, the only work that we are aware of where  \eqref{E1.1} with forcing functions in Sobolev spaces with negative exponents has  been investigated is \cite{Serre1983}.

The plan of the paper is the following. In Section 2 the well-posedness results for the Stokes and the Navier-Stokes equations with $\bef \in L^q(I;\bWmop)$ under proper conditions on $p$ and $q$ are presented.  Some selected proofs are postponed to Sections 3 and 4. Section 5 presents a sensitivity analysis with respect to the right hand side, and Section 6 an asymptotic stability analysis. The proof of a  technical result on the nonlinearity appearing in \eqref{E1.1} is given in the Appendix.
\vspace{2mm}

\noindent\textbf{NOTATION}\vspace{2mm}

In this paper, we denote $\bLs = \bigotimes_{i=1}^dL^s(\Omega)$ and $\bWos = \bigotimes_{i=1}^dW_0^{1,s}(\Omega)$ for $s \in (1,\infty)$, and we choose the norm in $\bWos$ as
\[
\|\by\|_\bWos = \|\nabla\by\|_\bLs = \left(\int_\Omega|\nabla\by|^s\, dx\right)^{\frac{1}{s}}  = \left(\int_\Omega[\sum_{j = 1}^d|\nabla \by_j|^2 ]^{\frac{s}{2}}\, dx\right)^{\frac{1}{s}}.
\]
We also consider the spaces
\begin{align*}
&\bH = \text{closure of }\{\phi\in \mathbf{C}^\infty_0(\Omega):  \text{ div}\,  \phi =0 \}\text{ in }\bLd,\\
&\bWs = \{y \in \bWos : \div{y} = 0\}.
\end{align*}
For $s = 2$ we set $\bHo = \mathbf{W}_0^{1,2}(\Omega)$ and $\bV = \mathbf{W}_2(\Omega)$. We also define the following spaces
\begin{align*}
&\bWoT = \{\by \in L^2(I;\bV) : \frac{\partial \by}{\partial t} \in L^2(I;\bV')\}\\
&\bWrsoT = \{\by \in L^r(I;\bWs) : \frac{\partial \by}{\partial t} \in L^r(I;\bWsc')\}
\end{align*}
with $r, s \in (1,\infty)$, endowed with the norms
\begin{align*}
&\|\by\|_\bWoT = \|\by\|_{L^2(I;\bHo)} + \|\frac{\partial \by}{\partial t}\|_{L^2(I;\bV')},\\
&\|\by\|_\bWrsoT = \|\by\|_{L^r(I;\bWos)} + \|\frac{\partial \by}{\partial t}\|_{L^r(I;\bWsc')}.
\end{align*}
Obviously these are reflexive Banach spaces, and $\bWoT = \bWrsoT$ if $r = s = 2$.

Now we consider the interpolation space $\bBrs = (\bWsc',\bWs)_{1-1/r,r}$. From \cite[Chap. III/4.10.2]{Amann1995} we know that $\bWrsoT \subset C([0,T];\bBrs)$ and the trace mapping $\by \in \bWrsoT \to \by(0) \in \bBrs$ is surjective. If $r = s = 2$, then it is known that $\mathbf{B}_{2,2}(\Omega) = (\bV',\bV)_{\frac{1}{2},2} = \bH$. Hence, the embedding $\bWoT \subset C([0,T];\bH)$ holds; see \cite[Page 22, Proposition I-2.1]{Lions-Magenes68} and \cite[Page 143, Remark 3]{Triebel1978}.

\section{Well-Posedness Results}
\label{S2}
\setcounter{equation}{0}

The aim of this section is to prove the well-posedness of the following Navier-Stokes equations in dimension 2
\begin{equation}
\left\{\begin{array}{l}\displaystyle\frac{\partial \by}{\partial t} -\nu\Delta\by + (\by \cdot \bna)\by + \nabla\p = \bef\ \text{ in } Q,\\[1.2ex]\div\by = 0 \ \text{ in } Q, \ \by = 0 \ \text{ on } \Sigma,\ \by(0) = \by_0 \text{ in } \Omega,\end{array}\right.
\label{E2.1}
\end{equation}
where $\nu > 0$ is the kinematic viscosity coefficient, $\bef \in L^q(I;\bWmop)$, and $\by_0 \in \bY_0 = \bH + \bBqp$. The parameters $p$ and $q$ are fixed throughout this manuscript, and it is assumed  that
\begin{equation}
\frac{4}{3} \le p < 2 \text { and  }q > \frac{2p}{p - 1}
\label{E2.2}
\end{equation}
hold; with the exception of Corollary \ref{C2.1}. Observe that these assumptions imply that $q > 4$. {This condition \eqref{E2.2} is essential for the well-posedness of the bilinear form introduced in Lemma \ref{L2.1} as well as in the proofs of Theorem \ref{T2.1} and Proposition \ref{T2.3}. The low regularity of the force is due to the assumption $p < 2$. For $p \ge 2$ the solvability of \eqref{E2.1} is well known.}
The space $\bY_0$ is  endowed with the norm
\[
\|\by_0\|_{\bY_0} =  \inf_{\by_0 = \by_{01} + \by_{02}}\|\by_{01}\|_\bLd + \|\by_{02}\|_\bBqp,
\]
which makes it a Banach space. The assumption  $\Omega \subset \mathbb{R}^2$ is imposed throughout this paper,  except in Theorem \ref{T2.2}, which addresses the Stokes equation.

Now we introduce the following spaces:
\begin{align*}
&\bY = [L^2(I;\bV) \cap L^\infty(I;\bH)] + L^q(I;\bWp),\\
&\mY = \bWoT + \bWqpoT.
\end{align*}
They are Banach spaces with the norms
\begin{align*}
&\|\by\|_Y = \inf_{\by = \by_1 + \by_2}\|\by_1\|_{L^2(I;\bHo)} + \|\by_1\|_{L^\infty(I;\bLd)} + \|\by_2\|_{L^q(I;\bWop)},\\
&\|\by\|_\mY = \inf_{\by = \by_1 + \by_2}\|\by_1\|_\bWoT + \|\by_2\|_\bWqpoT.
\end{align*}
The solution of \eqref{E2.1} will be found in $\mY$. Before proving the existence of such a solution, let us present  the following technical lemma. Its proof is given in the Appendix.

\begin{lmm}
Assume that \eqref{E2.2} and $\Omega \subset \mathbb{R}^2$ hold. The bilinear operator $B:\bY \times \bY \longrightarrow L^2(I;\bHmo)$ defined by $B(\by_1,\by_2) = (\by_1 \cdot \nabla)\by_2$ is continuous.
\label{L2.1}
\end{lmm}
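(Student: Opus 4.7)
The plan is to leverage the sum structure of $\bY$ together with the fact that both summands consist of divergence-free fields. Writing $\by_i = \bv_i + \bw_i$ with $\bv_i \in L^2(I;\bV)\cap L^\infty(I;\bH)$ and $\bw_i \in L^q(I;\bWp)$, I would expand $B(\by_1,\by_2)$ by bilinearity into the four cross terms $B(\bv_1,\bv_2)$, $B(\bv_1,\bw_2)$, $B(\bw_1,\bv_2)$, $B(\bw_1,\bw_2)$, and bound each separately. Since the first factor in each term is divergence-free, the distributional identity $(\bu\cdot\nabla)\bz = \div(\bu\otimes\bz)$ applies, and duality between $\bHo$ and $\bHmo$ then gives
\[
\|(\bu\cdot\nabla)\bz\|_{\bHmo} \le \|\bu\otimes\bz\|_{\bLd} \le \|\bu\|_{\bLf}\|\bz\|_{\bLf}.
\]
The task therefore reduces to a product estimate in $L^2(I;\bLd)$ for each of the four combinations.

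Two classical embeddings then do the rest. First, the two-dimensional Ladyzhenskaya inequality $\|\bu\|_{\bLf}^4 \le C\|\bu\|_{\bLd}^2\|\nabla\bu\|_{\bLd}^2$, integrated in time, yields $L^2(I;\bV)\cap L^\infty(I;\bH) \hookrightarrow L^4(I;\bLf)$. Second, the two-dimensional Sobolev embedding $\bWop \hookrightarrow \mathbf{L}^{2p/(2-p)}(\Omega)$, combined with boundedness of $\Omega$ and the hypothesis $p\ge 4/3$ (which forces $2p/(2-p)\ge 4$), yields $L^q(I;\bWp) \hookrightarrow L^q(I;\bLf)$. With both summands now providing $\bLf$-valued factors in either $L^4(I)$ or $L^q(I)$, H\"older's inequality in the time variable dispatches the four cases: two $L^4$ factors combine directly into $L^2$; an $L^4$ and an $L^q$ factor combine into $L^r$ with $1/r = 1/4 + 1/q < 1/2$, which embeds into $L^2$ on the bounded interval $I$; and two $L^q$ factors combine into $L^{q/2}\hookrightarrow L^2$ since $q > 4$.

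Summing the four bounds and passing to the infimum over all admissible splittings of $\by_1$ and $\by_2$ delivers $\|B(\by_1,\by_2)\|_{L^2(I;\bHmo)} \le C\|\by_1\|_\bY\|\by_2\|_\bY$. No individual step is deep, but the H\"older bookkeeping is tight: the endpoint $p=4/3$ is precisely what is needed to land the Sobolev embedding inside $\bLf$, while $q>4$ — which is all that \eqref{E2.2} actually feeds into this lemma, since $2p/(p-1)>4$ for every $p\in[4/3,2)$ — is precisely what allows the temporal H\"older exponents to close in the mixed and $(\bw,\bw)$ pairings. The main obstacle is therefore not analytical depth but careful verification of the four tensor-product estimates; once those are in place, the divergence identity converts them directly into the required bound in $L^2(I;\bHmo)$.
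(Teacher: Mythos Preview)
Your proposal is correct and follows essentially the same approach as the paper: decompose each $\by_i$ into its two summands, bound the four resulting bilinear terms via the Ladyzhenskaya/Gagliardo inequality and the Sobolev embedding $\bWop\hookrightarrow\bLf$ (from $p\ge 4/3$), apply H\"older in time (using $q>4$), and pass to the infimum over splittings. The only cosmetic difference is that you route all four cases through the single pointwise bound $\|B(\bu,\bz)\|_{\bHmo}\le\|\bu\|_{\bLf}\|\bz\|_{\bLf}$ via the divergence identity, whereas the paper handles the $(\bv_1,\bw_2)$ term with the Gagliardo exponent $r=2p'$ directly against $\|\nabla\bw_2\|_{\bLp}$; your packaging is slightly more uniform but substantively identical.
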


As usual, we can remove the pressure from the equation \eqref{E2.1} by using divergence free test functions.
\begin{dfntn}
We say that $\by \in \mY$ is a variational solution of \eqref{E2.1} if
\begin{equation}
\left\{
\begin{array}{l}
\displaystyle\langle\frac{d}{dt}\by(t),\bpsi\rangle_{\bWpc)' ,\bWpc} + a(\by(t),\bpsi) + b(\by(t),\by(t),\bpsi) \\
= \langle\bef(t),\bpsi\rangle_{\bWmop,\bWopc}\ \text{ in } (0,T),\ \ \forall \bpsi \in \bWpc,\\
\by(0) = \by_0,\end{array}\right.
\label{E2.3}
\end{equation}
where
\begin{align*}
&a(\by(t),\bpsi) = \nu\int_\Omega\nabla\by(x,t) : \nabla\bpsi(x)\, dx = \nu\sum_{i = 1}^2\int_\Omega\nabla\by_i(x,t)\nabla\bpsi_i(x)\, dx,\\
&b(\by(t),\by(t),\bpsi) = \langle B(\by(t),\by(t)),\bpsi \rangle_{\bHmo,\bHo} = \int_\Omega[\by(t) \cdot \nabla]\by(t)\cdot\nabla\bpsi\, dx.
\end{align*}
A distribution $\p$ in $Q$ is called an associated pressure if the equation
\[
\frac{\partial \by}{\partial t} -\nu\Delta\by + (\by \cdot \bna)\by + \nabla\p = \bef\ \text{ in } Q
\]
is satisfied in the distribution sense. Then, $(\by,\p)$ is called a solution of \eqref{E2.1}.
\label{D2.1}
\end{dfntn}

Given $\by$ satisfying \eqref{E2.3}, the pressure $\p$ is obtained by using De Rham's theorem; see \cite[Lemma IV-1.4.1]{Sohr2001}. The details are obtained in a similar way as in the proof for the  case of the Stokes equation, see step (iv) of the proof of Theorem \ref{T2.2} in Section \ref{S3}.

\begin{rmrk}
Given $s \in (1,\infty)$ and $\bg \in\bWmos$, we have that $\bg:\bWosc \longrightarrow \mathbb{R}$ is a linear and continuous mapping. We know that $\bWsc$ is a closed subspace of $\bWosc$. Therefore, for every element $\bg \in \bWmos$ we can consider its restriction to $\bWsc$, and $\|\bg\|_{\bWsc'} \le \|\bg\|_\bWmos$ holds. Moreover, from Hahn-Banach Theorem we know that every element of $\bWsc'$ is the restriction of an element of $\bWmos$. It is important to observe that the restriction of an element $\bg \in \bWmos$ to $\bWsc$ can be zero even though $\bg \neq 0$. Actually, given an element $\bg \in \bWsc'$, there are infinitely many elements in $\bWmos$ whose restriction to $\bWsc$ coincide with $\bg$. As a consequence, the variational solution $\by$ of \eqref{E2.1}, as defined by \eqref{E2.3}, only depends on the restriction of $\bef$ to $\bWpc$. Thus, different elements $\bef$ can lead to the same solution $\by$, but the pressure $\p$ changes. The pressure depends on the action of $\bef$ on the whole domain $\bWopc$.
\label{R2.1}
\end{rmrk}

As pointed out in Section 1, the embeddings $\bWoT \subset C([0,T];\bH)$ and $\bWqpoT \subset C([0,T];\bBqp)$ hold. Hence, $\mY \subset C([0,T];\bY_0)$ and, consequently, the initial condition $\by(0) = \by_0$ with $\by_0 \in \bY_0$ makes sense.

The next theorem is the main result of this section.

\begin{thrm}
Suppose that \eqref{E2.2} and $\Omega \subset \mathbb{R}^2$ hold. Then, system \eqref{E2.1} has a unique solution $(\by,\p) \in \mY \times W^{-1,q}(I;L^p(\Omega)/\mathbb{R})$. Furthermore, there exists a nondecreasing function $\eta_{p,q}:[0,\infty) \longrightarrow [0,\infty)$ with $\eta_{p,q}(0) = 0$ such that
\begin{equation}
\|\by\|_\mY \le \eta_{p,q}\Big(\|\bef\|_{L^q(I;\bWpc')} + \|\by_0\|_{\bY_0}\Big).
\label{E2.4}
\end{equation}
\label{T2.1}
\end{thrm}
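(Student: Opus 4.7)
\emph{Lifting via the Stokes equation.} The proof reduces the Navier--Stokes problem to a standard 2-D Navier--Stokes problem for a remainder in $\bWoT$. Choose a decomposition $\by_0=\by_{01}+\by_{02}$ with $\by_{01}\in\bH$ and $\by_{02}\in\bBqp$ nearly attaining $\|\by_0\|_{\bY_0}$, and let $\bz\in\bWqpoT$ be the Stokes solution produced by Theorem \ref{T2.2} from the data $(\bef,\by_{02})$, which comes with the bound $\|\bz\|_\bWqpoT\le C(\|\bef\|_{L^q(I;\bWpc')}+\|\by_{02}\|_\bBqp)$. Writing $\by=\bw+\bz$, the remainder $\bw$ must satisfy
\begin{equation*}
\partial_t\bw-\nu\Delta\bw+(\bw\cdot\nabla)\bw = -B(\bw,\bz)-B(\bz,\bw)-B(\bz,\bz),\quad \div\bw=0,\quad \bw(0)=\by_{01}\in\bH,
\end{equation*}
in the div-free variational sense. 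Since $\bWqpoT\hookrightarrow L^q(I;\bWp)\subset\bY$, and any candidate $\bw\in\bWoT$ sits in $\bY$ as well, Lemma \ref{L2.1} places the three perturbation terms in $L^2(I;\bHmo)$, which is the natural forcing class for 2-D Navier--Stokes with solution in $\bWoT$.

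\emph{Existence for $\bw$.} Run a Galerkin scheme $\bw_m$ in the Stokes eigenbasis and test with $\bw_m$. The cancellations $b(\bw_m,\bw_m,\bw_m)=b(\bz,\bw_m,\bw_m)=0$ leave
\begin{equation*}
\tfrac12\tfrac{d}{dt}\|\bw_m\|_\bH^2+\nu\|\nabla\bw_m\|_\bLd^2 = -b(\bw_m,\bz,\bw_m) - b(\bz,\bz,\bw_m).
\end{equation*}
Rewriting $b(\bw_m,\bz,\bw_m)=-b(\bw_m,\bw_m,\bz)$, both terms are controlled via H\"older's inequality, 2-D Gagliardo--Nirenberg interpolation of $\bw_m$ between $\bH$ and $\bV$, and Young's inequality in the form $\tfrac{\nu}{2}\|\nabla\bw_m\|_\bLd^2 + \phi(t)\|\bw_m\|_\bH^2 + \psi(t)$; the condition \eqref{E2.2} is exactly what forces $\phi,\psi\in L^1(I)$, and securing this integrability is the technical heart of the argument and the main obstacle of the proof. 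Gronwall's lemma then yields a uniform bound of $\bw_m$ in $L^\infty(I;\bH)\cap L^2(I;\bV)$, from which the equation produces $\partial_t\bw_m$ uniformly bounded in $L^2(I;\bV')$. Aubin--Lions compactness and a routine passage to the limit (with $B$ handled by strong $L^2(I;\bH)$ convergence together with Lemma \ref{L2.1}) give $\bw\in\bWoT$ solving the reduced problem, so $\by=\bw+\bz\in\mY$ solves \eqref{E2.3}, and taking the infimum over admissible splittings of $\by_0$ supplies the nondecreasing $\eta_{p,q}$ in \eqref{E2.4}.

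\emph{Uniqueness and pressure.} If $\by,\by'\in\mY$ are two solutions with identical data, then $\bw=\by-\by'\in\mY$ satisfies
\begin{equation*}
\partial_t\bw-\nu\Delta\bw+B(\bw,\by)+B(\by',\bw)=0,\quad \div\bw=0,\quad \bw(0)=0,
\end{equation*}
whose nonlinear part lies in $L^2(I;\bHmo)$ by Lemma \ref{L2.1}. Uniqueness for the corresponding homogeneous linear Stokes problem (a special case of Theorem \ref{T2.2}) then upgrades $\bw$ to $\bWoT$. Testing with $\bw$, using $b(\by',\bw,\bw)=0$ and $b(\bw,\by,\bw)=-b(\bw,\bw,\by)$, and reproducing the interpolation estimate above with $\by$ in place of $\bz$, I obtain $\tfrac{d}{dt}\|\bw\|_\bH^2\le\phi(t)\|\bw\|_\bH^2$ with $\phi\in L^1(I)$; Gronwall closes the argument. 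Finally, the distribution $\bef-\partial_t\by+\nu\Delta\by-(\by\cdot\nabla)\by\in W^{-1,q}(I;\bWmop)$ annihilates divergence-free test fields, so De Rham's theorem, applied exactly as in step (iv) of the forthcoming proof of Theorem \ref{T2.2}, produces a unique $\p\in W^{-1,q}(I;L^p(\Omega)/\mathbb{R})$ realising the pressure.
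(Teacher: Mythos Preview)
Your proof is correct and follows essentially the same strategy as the paper: lift the rough data via the Stokes solution $\bz=\by_S\in\bWqpoT$ from Theorem \ref{T2.2}, solve a perturbed 2-D Navier--Stokes problem for the remainder $\bw=\by_N\in\bWoT$ by Galerkin and Gronwall, and handle uniqueness by first upgrading the difference to $\bWoT$ via Stokes uniqueness and then testing with itself. The only organisational difference is that the paper packages your Galerkin step as the separate Proposition \ref{T2.3} (with parameters $\nu_0,\be_1,\be_2$), whereas you carry it out inline for the specific case $\be_1=\be_2=\bz$; your observation that condition \eqref{E2.2} is precisely what makes the Gronwall weight $\phi$ integrable is exactly the content of the estimates \eqref{E4.3}--\eqref{E4.4}. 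One small point worth making explicit in your uniqueness paragraph: the ``upgrade'' of $\bw\in\mY$ to $\bWoT$ works because $\mY\subset\bWdpoT$ and the right-hand side lies in $L^2(I;\bWmop)$, so Theorem \ref{T2.2} with $(r,s)=(2,p)$ forces $\bw$ to coincide with the classical $\bWoT$ Stokes solution, which also sits in $\bWdpoT$.
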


For the proof of this result we will use the next two theorems. The first one concerns the associated Stokes equation and holds in arbitrary dimension $d$. It is given by
\begin{equation}
\left\{\begin{array}{l}\displaystyle\frac{\partial \by_S}{\partial t} -\nu\Delta\by_S +\nabla\p_S = \bg\ \text{ in } Q,\\[1.2ex]\div\by_S = 0 \ \text{ in } Q, \ \by_S = 0 \ \text{ on } \Sigma,\ \by_S(0) = \by_{S0} \text{ in } \Omega.\end{array}\right.
\label{E2.5}
\end{equation}
Given $\bg \in L^r(I;\bWmos)$ and $\by_{S0} \in \bBrs$ with $1 < r, s < \infty$, analogously to Definition \ref{D2.1}, we say that $\by_S \in \bWrsoT$ is a variational solution of \eqref{E2.5} if for every $\bpsi \in \bWsc$
\begin{equation}
\hspace{-0.25cm}\left\{\begin{array}{l}
\displaystyle\langle\frac{d}{dt}\by_S(t),\bpsi\rangle_{(\bWsc)' ,\bWsc} + a(\by_S(t),\bpsi) = \langle\bg(t),\bpsi\rangle_{\bWmos,\bWosc}\text{ in } (0,T),\\
\by_S(0) = \by_{S0}.\end{array}\right.
\label{E2.6}
\end{equation}
 A distribution $\p_S$ in $Q$ is called an associated pressure if the equation
\[
\frac{\partial \by_S}{\partial t} -\nu\Delta\by_S + \nabla\p_S = \bg\ \text{ in } Q
\]
is satisfied in the distribution sense.

\begin{thrm}
Assume that $\Omega \subset \mathbb{R}^d$ with $d \ge 2$. Given $\bg \in L^r(I;\bWmos)$ and $\by_{S0} \in \bBrs$ with $1 < r, s < \infty$, there exists a unique solution $(\by_S,\p_S) \in \bWrsoT \times W^{-1,r}(I;L^s(\Omega)/\mathbb{R})$ of \eqref{E2.5}.
Moreover, there exists a constant $C_{r,s}$ such that
\begin{equation}
\|\by_S\|_\bWrsoT \le C_{r,s}\Big(\|\bg\|_{L^r(I;\bWsc')} + \|\by_{S0}\|_\bBrs\Big).
\label{E2.7}
\end{equation}
\label{T2.2}
\end{thrm}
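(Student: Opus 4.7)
The strategy is to split $\by_S = \by_S^{(1)} + \by_S^{(2)}$, where $\by_S^{(1)}$ solves the homogeneous Cauchy problem with initial datum $\by_{S0}$ and $\by_S^{(2)}$ solves the inhomogeneous equation with zero initial datum and forcing $\bg$, and to recover the pressure a posteriori via De Rham's theorem.

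First I would set up the functional framework around the Stokes operator $A_s$ on $\bLss$, which on smooth bounded domains generates a bounded analytic semigroup (see \cite{Giga-Sohr81,Sohr2001}). The Helmholtz projection $\mathbb{P}_s$ on $\bLs$, available since $\Gamma$ is $C^3$, extrapolates to a bounded projection from $\bWmos$ onto $\bWsc'$, and $A_s$ extrapolates to a sectorial operator on $\bWsc'$ whose domain is $\bWs$. Under this identification the weak problem \eqref{E2.6} reads $\partial_t \by_S + A_s \by_S = \mathbb{P}_s \bg$ in $\bWsc'$; the components of $\bg$ annihilating $\bWsc$ are absorbed into the pressure, consistently with the observation in Remark \ref{R2.1}.

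For the homogeneous part I would use the semigroup trace characterization, $\|e^{-\cdot A_s}\by_{S0}\|_{L^r(I;\bWs)\,\cap\,W^{1,r}(I;\bWsc')} \simeq \|\by_{S0}\|_{(\bWsc',\bWs)_{1-1/r,r}} = \|\by_{S0}\|_\bBrs$, which is precisely the real interpolation space appearing in the statement. For the inhomogeneous part with zero initial datum I would invoke maximal $L^r$-regularity for $A_s$: this is classical on the $\bLss$-scale for smooth bounded domains (Solonnikov, Giga--Sohr), and it transports to the extrapolation on $\bWsc'$ because the bounded $H^\infty$-calculus, and hence $\mathcal{R}$-sectoriality, is preserved under the extrapolation procedure. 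This yields $\by_S^{(2)}\in\bWrsoT$, and combined with the homogeneous bound gives the linear estimate \eqref{E2.7}.

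The pressure is then recovered pointwise in $t$: the distribution $\bg(t) - \partial_t\by_S(t) + \nu\Delta\by_S(t)$ lies in $\bWmos$ and annihilates $\bWsc$, so by De Rham's theorem (\cite[Lemma IV-1.4.1]{Sohr2001}) there is a unique $\p_S(t)\in L^s(\Omega)/\mathbb{R}$ whose gradient equals it; the Bogovskii-type inverse-divergence estimate places $\p_S$ in $W^{-1,r}(I;L^s(\Omega)/\mathbb{R})$ with a bound of the same form as \eqref{E2.7}. Uniqueness of $\by_S$ in $\bWrsoT$ follows by linearity from the maximal regularity bound applied to the difference of two solutions. The step I expect to cost the most effort is the precise identification of the negative extrapolation space of $A_s$ with $\bWsc'$ and the corresponding transfer of maximal regularity; this is careful bookkeeping rather than a deep new result, but it must be done with attention to how both the base space and the domain of the extrapolated operator move with $s$.
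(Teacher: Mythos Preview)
Your plan is essentially the same strategy the paper follows, and the step you correctly single out as the crux---identifying the negative extrapolation space of $A_s$ with $\bWsc'$---is precisely what the paper spends most of its effort on. The paper makes this identification concrete via the square root: it uses that $\mD(A_s^{1/2})=\bWs$ (relying on the bounded $H^\infty$-calculus of $A_s$ and interpolation), so that $(A_{s'}^{1/2})':\bLss\to\bWsc'$ is an isomorphism, and then \emph{conjugates} the problem through this isomorphism back to $\bLss$, where the known maximal $L^r$-regularity of \eqref{E3.2} applies directly. This avoids invoking any abstract transfer of $\mathcal{R}$-sectoriality under extrapolation; once the square root identification is in hand, the argument is a direct change of variables $\tilde\by=(A_{s'}^{1/2})'{}^{-1}\by_S$. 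Your splitting into homogeneous and inhomogeneous parts is unnecessary under this scheme, since the maximal regularity estimate \eqref{E3.3} already handles nonzero initial data in the correct trace space.

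One point to watch in your pressure step: applying de~Rham \emph{pointwise in $t$} to $\bg(t)-\partial_t\by_S(t)+\nu\Delta\by_S(t)$ is delicate, because $\partial_t\by_S\in L^r(I;\bWsc')$ only, and $\bWsc'$ is not a subspace of $\bWmos$ (it is a quotient), so this combination is not a priori an element of $\bWmos$ for a.e.\ $t$. The paper circumvents this by first integrating the variational equation in time, so that $\partial_t\by_S$ is replaced by $\by_S(t)-\by_{S0}$; after rewriting $\by_{S0}=-\nu\Delta\hat\by_{S0}$ with $\hat\by_{S0}\in\bWs$, every term lies in $L^r(I;\bWmos)$ and de~Rham yields $\pi\in L^r(I;L^s(\Omega))$, with $\p_S=\partial_t\pi\in W^{-1,r}(I;L^s(\Omega)/\mathbb{R})$. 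This also explains naturally why the pressure lands only in $W^{-1,r}$ in time. Your argument will go through once you make this adjustment.
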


As mentioned at the end of section \ref{S1}, the embedding  $\bWrsoT \subset C([0,T];\bBrs)$ holds. Moreover, the trace mapping $\by \in \bWrsoT \to \by(0) \in \bBrs$ is continuous and surjective. This motivates our choice for the initial condition $\by_{S0} \in \bBrs$.

Though Theorem \ref{T2.2} is expected to hold by experts, it seems that there is no proof available in the literature. For this reason it is given in the next section. There, in Remark \ref{R3.1}, we shall also assert that Theorem \ref{T2.2} holds for domains which are only Lipschitz, provided that conditions on the ranges of $r$ and $s$ are met.

As a consequence of Theorem \ref{T2.2} we get the following corollary.
\begin{crllr}
Let assume that $p\in(2,\infty)$ and that $\Omega \subset \mathbb{R}^2$. Then, given $(\bef,\by_0) \in L^2(I;\bWmop) \times \mathbf{B}_{p,2}(\Omega)$, the system \eqref{E2.1} has a unique solution $(\by,\p) \in \bWoT \cap \mathbf{W}_{p',p}(0,T) \times W^{-1,p'}(I;L^2(\Omega)/\mathbb{R})$. Furthermore, there exist two constants $M_2 > 0$ and $M_p > 0$ such that
\begin{align}
\|\by\|_{\mathbf{W}_{p',p}(0,T)} &\le M_2\Big(\|\bef\|_{L^2(I;\bV')} + \|\by_0\|_\bH\Big)^2\notag\\
& + M_p\Big(\|\bef\|_{L^{p'}(I;\bWpc')} + \|\by_0\|_{\mathbf{B}_{p,2}(\Omega)}\Big).
\label{E2.8}
\end{align}
\label{C2.1}
\end{crllr}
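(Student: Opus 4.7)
The hypothesis $p>2$ reduces matters to the classical 2D regime on the data side. Since $p'<2$, Sobolev embedding gives $\bV\hookrightarrow\bWpc$, hence by duality $\bWmop\hookrightarrow\bWpc'\hookrightarrow\bV'$; and since $\bWp\hookrightarrow\bV$ as well, real interpolation of these endpoint embeddings yields $\mathbf{B}_{p,2}(\Omega)=(\bWpc',\bWp)_{1/2,2}\hookrightarrow(\bV',\bV)_{1/2,2}=\bH$. Thus $\bef\in L^2(I;\bV')$ and $\by_0\in\bH$, and the classical well-posedness theory for 2D Navier--Stokes produces a unique $\by\in\bWoT$ with the energy bound $\|\by\|_{L^\infty(I;\bH)}+\|\by\|_{L^2(I;\bV)}\le C(\|\bef\|_{L^2(I;\bV')}+\|\by_0\|_\bH)$. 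This provides the uniqueness claim of the corollary.

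To upgrade the regularity, I reinterpret \eqref{E2.1} as the Stokes problem \eqref{E2.5} with right-hand side $\bg:=\bef-(\by\cdot\nabla)\by$ and initial datum $\by_0$, and invoke Theorem~\ref{T2.2} with $r=p'$, $s=p$. The crux is the integrability $(\by\cdot\nabla)\by\in L^{p'}(I;\bWmop)$. Since $\div\by=0$ one has $(\by\cdot\nabla)\by=\div(\by\otimes\by)$ and $\|\div F\|_\bWmop\le\|F\|_\bLp$, so it suffices to bound $\|\by\|_{L^{2p'}(I;\mathbf{L}^{2p}(\Omega))}^{2}$. The 2D Gagliardo--Nirenberg inequality $\|\by\|_{\mathbf{L}^{2p}(\Omega)}\le C\|\by\|_\bH^{1/p}\|\by\|_\bV^{1/p'}$ combined with the identity $2p'(1-1/p)=2$ gives
\[
\int_0^T\|\by(t)\|_{\mathbf{L}^{2p}(\Omega)}^{2p'}\,dt\le C\|\by\|_{L^\infty(I;\bH)}^{2p'/p}\|\by\|_{L^2(I;\bV)}^{2},
\]
so $\|(\by\cdot\nabla)\by\|_{L^{p'}(I;\bWmop)}\le C\|\by\|_{L^\infty(I;\bH)}^{2/p}\|\by\|_{L^2(I;\bV)}^{2/p'}\le C(\|\bef\|_{L^2(I;\bV')}+\|\by_0\|_\bH)^{2}$ by the energy estimate. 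Since $\mathbf{B}_{p,2}(\Omega)\hookrightarrow\mathbf{B}_{p,p'}(\Omega)$ by monotonicity of real interpolation in $\theta$ (as $1/p<1/2$), Theorem~\ref{T2.2} delivers a Stokes solution $\tilde\by\in\mathbf{W}_{p',p}(0,T)$ together with a pressure $\p\in W^{-1,p'}(I;L^p(\Omega)/\mathbb{R})\hookrightarrow W^{-1,p'}(I;L^2(\Omega)/\mathbb{R})$ and the maximal-regularity bound $\|\tilde\by\|_{\mathbf{W}_{p',p}(0,T)}\le C_{p',p}(\|\bg\|_{L^{p'}(I;\bWpc')}+\|\by_0\|_{\mathbf{B}_{p,p'}(\Omega)})$; combining with the previous bound on $(\by\cdot\nabla)\by$ yields exactly \eqref{E2.8}.

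It remains to identify $\tilde\by$ with $\by$. Note that $\bg\in L^2(I;\bV')$ as well, from $\|(\by\cdot\nabla)\by\|_\bHmo\le C\|\by\|_\bLf^2$ and Ladyzhenskaya's inequality $\|\by\|_\bLf\le C\|\by\|_\bH^{1/2}\|\by\|_\bV^{1/2}$, so Theorem~\ref{T2.2} with $r=s=2$ applied to the same Stokes data selects a unique $\bWoT$-solution, which by construction is $\by$. A standard linear-uniqueness argument—testing the homogeneous Stokes equation satisfied by $\bw=\tilde\by-\by$ against $\bV$-valued divergence-free functions and applying Gronwall to $\|\bw(t)\|_\bH^2$—then forces $\bw\equiv 0$, so $\by=\tilde\by$ belongs to $\bWoT\cap\mathbf{W}_{p',p}(0,T)$. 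The main obstacle I foresee is the integrability step for the convective term via Gagliardo--Nirenberg with the sharp exponent identity $2p'(1-1/p)=2$; once that is secured, the corollary is a direct application of Theorem~\ref{T2.2} together with the linear uniqueness argument for the Stokes system.
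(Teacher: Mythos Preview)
Your proof is correct and follows essentially the same strategy as the paper: use the embeddings $\bWmop\hookrightarrow\bHmo$ and $\mathbf{B}_{p,2}(\Omega)\hookrightarrow\bH$ to invoke classical 2D well-posedness, then estimate $(\by\cdot\nabla)\by\in L^{p'}(I;\bWmop)$ via the Gagliardo--Nirenberg inequality with exponent $2p$ (the paper writes this via the trilinear form $b(\by,\bpsi,\by)$ rather than $\div(\by\otimes\by)$, but the computation is identical) and bootstrap through Theorem~\ref{T2.2} with $r=p'$, $s=p$. You in fact supply two details the paper leaves implicit---the embedding $\mathbf{B}_{p,2}(\Omega)\hookrightarrow\mathbf{B}_{p,p'}(\Omega)$ needed for the initial datum, and the identification $\tilde\by=\by$; for the latter your energy/Gronwall sketch is not quite justified in $L^{p'}$-based spaces, but it becomes rigorous once you observe that both $\by$ and $\tilde\by$ lie in $\mathbf{W}_{p',2}(0,T)$ (using $\bWp\hookrightarrow\bV$ and $\bWpc'\hookrightarrow\bV'$ for $p>2$) and invoke the uniqueness assertion of Theorem~\ref{T2.2} in that common space.
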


\begin{proof}
From our assumptions on $p$, we have that $\bef \in L^2(I;\bWmop) \subset L^2(I;\bHmo)$ and $\by_0 \in \mathbf{B}_{p,2}(\Omega) \subset \mathbf{B}_{2,2}(\Omega) = \bH$. Hence, it is well known that \eqref{E2.1} has a unique solution $\by \in \bWoT$. Let us prove that $(\by\cdot\nabla)\by \in L^{p'}(I;\bWmop)$. Given an arbitrary element $\bpsi \in \bWopc$ and using \eqref{E7.1} with $r = 2p$ we infer
\begin{align*}
&|\langle(\by(t)\cdot\nabla)\by(t),\bpsi\rangle_{\bWmop,\bWopc}| = |b(\by(t),\bpsi,\by(t))|\\
& \le \|\by(t)\|^2_{\mathbf{L}^{2p}(\Omega)}\|\bpsi\|_\bWopc \le C_{2p}\|\by(t)\|^{\frac{2}{p}}_{\bLd}\|\by(t)\|^{\frac{2}{p'}}_\bHo\|\bpsi\|_\bWopc\\
&\le C_{2p}\|\by\|^{\frac{2}{p}}_{L^\infty(I;\bLd)}\|\by(t)\|^{\frac{2}{p'}}_\bHo\|\bpsi\|_\bWopc.
\end{align*}
Using this estimate and Young's inequality we deduce
\[
\|(\by\cdot\nabla)\by\|_{L^{p'}(I;\bWmop)} \le C_{2p}\|\by\|^{\frac{2}{p}}_{L^\infty(I;\bLd)}\|\by\|^{\frac{2}{p'}}_{L^2(I;\bHo)} \le C'\Big(\|\bef\|_{L^2(I;\bV')} + \|\by_0\|_\bH\Big)^2,
\]
where we have used the standard estimates for the solution of \eqref{E2.1} $\by \in L^2(I;\bHo) \cap L^\infty(I;\bH)$; see e.g. \cite[Theorem V.1.4]{Boyer-Fabrie2013}, or \cite[(3.135)]{Temam79}.

Since $\bef \in L^{p'}(I;\bWmop)$, we deduce from Theorem \ref{T2.2} with $\bg = \bef - (\by\cdot\nabla)\by$ that $\by \in \mathbf{W}_{p',p}(0,T)$ and
\begin{align*}
&\|\by\|_{\mathbf{W}_{p',p}(0,T)} \le C_{p',p}\Big(\|\bg\|_{L^{p'}(I;\bWpc')} + \|\by_0\|_{\mathbf{B}_{p,2}(\Omega)}\Big)\\
&\le C_{p',p}\Big[\|\bef\|_{L^{p'}(I;\bWpc')} + C'\Big(\|\bef\|_{L^2(I;\bV')} + \|\by_0\|_\bH\Big)^2 + \|\by_0\|_{\mathbf{B}_{p,2}(\Omega)}\Big],
\end{align*}
which implies \eqref{E2.8}.
\end{proof}

\begin{prpstn}
Suppose that \eqref{E2.2} and $\Omega \subset \mathbb{R}^2$ hold. Given $(\bg,\by_{N0}) \in L^2(I;\bHmo) \times \bH$, $\be_1, \be_2 \in \bY$, and $\nu_0 \ge 0$, then the system
\begin{equation}
\left\{\begin{array}{l}\displaystyle\frac{\partial \by_N}{\partial t} -\nu\Delta\by_N + \nu_0(\by_N \cdot \bna)\by_N + (\be_1 \cdot \bna)\by_N + (\by_N \cdot \bna)\be_2 + \nabla\p_N = \bg\ \text{ in } Q,\\[1.2ex]\div\by_N = 0 \ \text{ in } Q, \ \by_N = 0 \ \text{ on } \Sigma,\ \by_N(0) = \by_{N0} \text{ in } \Omega\end{array}\right.
\label{E2.9}
\end{equation}
has a unique solution $(\by_N,\p_N) \in \bWoT \times W^{-1,\infty}(I;L^2(\Omega)/\mathbb{R})$. Furthermore, there exists a nondecreasing function $\eta_N:[0,\infty) \longrightarrow (0,\infty)$ such that
\begin{equation}
\begin{array}{l}
\hspace{-0.2cm}\|\by_N\|_{L^2(I;\bHo)} + \|\by\|_{L^\infty(I;\bLd)}  \le \eta_N\Big(\|\be_2\|_\bY\Big)\Big(\|\bg\|_{L^2(I;\bV')} + \|\by_{N0}\|_{\bLd}\Big),\\
\hspace{-0.2cm}\|\by_N\|_\bWoT \le \nu_0\eta_N^2(\|\be_2\|_\bY)\Big(\|\bg\|_{L^2(I;\bV')} + \|\by_{N0}\|_{\bLd}\Big)^2\\
\hspace{0.6cm}+ [(1 +\nu +\|\be_1\|_\bY + \|\be_2\|_\bY)\eta_N(\|\be_2\|_\bY)+1]\Big(\|\bg\|_{L^2(I;\bV')} + \|\by_{N0}\|_{\bLd}\Big).
\end{array}
\label{E2.10}
\end{equation}
\label{T2.3}
\end{prpstn}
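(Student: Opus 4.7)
The plan is to view \eqref{E2.9} as a 2D Navier--Stokes equation perturbed by two linear drifts with coefficients $\be_1,\be_2\in\bY$ of low regularity, and to build solutions through a Galerkin scheme coupled with an energy estimate tailored to the sum-space structure of $\bY$. Once the first bound in \eqref{E2.10} is secured, passage to the limit, uniqueness, the $\bWoT$-bound, and the pressure recovery via De Rham's theorem are all standard adaptations of the classical 2D theory.

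The heart of the argument is the a priori estimate. Testing \eqref{E2.9} against $\by_N$ kills $\nu_0 b(\by_N,\by_N,\by_N)$ and $b(\be_1,\by_N,\by_N)$, since both $\by_N$ and every element of $\bY$ are divergence-free. The only nontrivial nonlinear contribution is
\[
b(\by_N,\be_2,\by_N)=-b(\by_N,\by_N,\be_2),
\]
which I would control by decomposing $\be_2=\be_{2,1}+\be_{2,2}$ with $\be_{2,1}\in L^2(I;\bV)\cap L^\infty(I;\bH)$ and $\be_{2,2}\in L^q(I;\bWp)$. For $\be_{2,1}$ the 2D Ladyzhenskaya inequality yields
\[
|b(\by_N,\by_N,\be_{2,1})|\le C\|\by_N\|_{\bLd}^{1/2}\|\nabla\by_N\|_{\bLd}^{3/2}\|\be_{2,1}\|_{\bLd}^{1/2}\|\nabla\be_{2,1}\|_{\bLd}^{1/2},
\]
while for $\be_{2,2}$, after one integration by parts and the Gagliardo--Nirenberg bound $\|\by_N\|_{\bL^{2p'}}^{2}\le C\|\by_N\|_{\bLd}^{2/p'}\|\nabla\by_N\|_{\bLd}^{2/p}$, one obtains
\[
|b(\by_N,\by_N,\be_{2,2})|\le C\|\by_N\|_{\bLd}^{2/p'}\|\nabla\by_N\|_{\bLd}^{2/p}\|\nabla\be_{2,2}\|_{\bLp}.
\]
Young's inequality absorbs the gradients into $\tfrac{\nu}{2}\|\nabla\by_N\|_{\bLd}^{2}$ and leaves a weight $h(t)\|\by_N\|_{\bLd}^{2}$ with $\|h\|_{L^1(I)}$ bounded by a nondecreasing function of $\|\be_2\|_\bY$; integrability of $\|\nabla\be_{2,2}\|_{\bLp}^{p'}$ is guaranteed precisely by $q>2p/(p-1)=2p'$ in \eqref{E2.2}. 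Gronwall's lemma then delivers the first estimate in \eqref{E2.10}.

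For existence I would rerun this estimate at the Galerkin level using the Stokes eigenbasis and pass to the limit through an Aubin--Lions compactness argument; the cubic $\nu_0$-term is treated as in the classical 2D Navier--Stokes analysis. Uniqueness follows by testing the difference of two solutions against itself and repeating the same estimates, now linearized around one solution. For the second inequality in \eqref{E2.10} I would read $\partial_t\by_N\in L^2(I;\bV')$ directly from the equation: the viscous piece contributes $\nu\|\by_N\|_{L^2(I;\bV)}$, and Lemma \ref{L2.1} bounds $\nu_0 B(\by_N,\by_N)$, $B(\be_1,\by_N)$ and $B(\by_N,\be_2)$ in $L^2(I;\bHmo)\subset L^2(I;\bV')$ by $\nu_0\|\by_N\|_\bY^{2}$, $\|\be_1\|_\bY\|\by_N\|_\bY$ and $\|\by_N\|_\bY\|\be_2\|_\bY$, respectively; substituting the first bound for $\|\by_N\|_\bY$ reproduces exactly the structure of the right-hand side in \eqref{E2.10}. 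The pressure $\p_N$ is then recovered from De Rham's theorem as sketched after Definition \ref{D2.1}.

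The principal obstacle is the trilinear estimate on $b(\by_N,\by_N,\be_2)$: because $\be_2$ may have only a distributional gradient in $\bLp$ with $p<2$, standard $L^2$-Sobolev embeddings do not close the energy identity, and it is precisely the decomposition of $\bY$, combined with Gagliardo--Nirenberg and the exponent condition $q>2p'$, that makes the Young/Gronwall step succeed. Everything beyond this is a routine adaptation of the 2D Navier--Stokes machinery.
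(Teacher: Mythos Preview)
Your proposal is correct and follows essentially the same route as the paper: Galerkin approximation with the Stokes eigenbasis, the energy identity with the key trilinear term $b(\by_N,\be_2,\by_N)$ handled by splitting $\be_2=\be_{2,1}+\be_{2,2}$ along the sum structure of $\bY$, Gagliardo--Nirenberg plus Young plus Gronwall for the first bound in \eqref{E2.10}, then Lemma~\ref{L2.1} to estimate $\partial_t\by_N$ in $L^2(I;\bV')$, and De Rham for the pressure. The only cosmetic difference is that for the $\be_{2,1}$ piece you integrate by parts to $b(\by_N,\by_N,\be_{2,1})$ and pick up a $\|\nabla\by_N\|^{3/2}$ factor, whereas the paper estimates $b(\by_N,\be_{2,1},\by_N)$ directly via $\|\by_N\|_{\bLf}^2\|\be_{2,1}\|_{\bHo}$ and gets $\|\nabla\by_N\|$ to the first power; both close by Young and yield an $L^1(I)$ weight bounded by a nondecreasing function of $\|\be_2\|_\bY$.
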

Similarly to the previous cases, we say that $\by_N \in \bWoT$ is a variational solution of \eqref{E2.9} if

\begin{equation}
\left\{
\begin{array}{l}
\displaystyle\frac{d}{dt}(\by_N(t),\bpsi)_\bLd + a(\by_N(t),\bpsi) + \nu_0b(\by_N(t),\by_N(t),\bpsi)\vspace{2mm}\\
+ b(\be_1(t),\by_N(t),\bpsi) + b(\by_N(t),\be_2(t),\bpsi) \\
= \langle\bg(t),\bpsi\rangle_{\bHmo,\bHo}\ \text{ in } (0,T),\ \ \forall \bpsi \in \bV,\\
\by_N(0) = \by_{N0}.\end{array}\right.
\label{E2.11}
\end{equation}
Furthermore, a distribution $\p_N$ in $Q$ is called an associated pressure if the equation
\[
\frac{\partial \by_N}{\partial t} -\nu\Delta\by_N + \nu_0(\by_N \cdot \bna)\by_N + (\be_1 \cdot \bna)\by_N + (\by_N \cdot \bna)\be_2 + \nabla\p_N = \bg\ \text{ in } Q
\]
is satisfied in the distribution sense.

Theorem \ref{T2.2} and Proposition \ref{T2.3} will be proved in Sections \ref{S3} and \ref{S4}, respectively.

\begin{proof}[Proof of Theorem \ref{T2.1}]

We are going to prove the existence of a solution $\by = \by_N + \by_S$ with $\by_N \in \bWoT$ and $\by_S \in \bWqpoT$. To this end we write $\by_0 = \by_{N0} + \by_{S0}$ with $\by_{N0} \in H$ and $\by_{S0} \in \bBqp$. Using Theorem \ref{T2.2} with $r=q$ and $s = p$, we define the function $\by_S \in \bWqpoT$ as the unique solution of the system
\begin{equation}
\left\{\begin{array}{l}\displaystyle\frac{\partial \by_S}{\partial t} -\nu\Delta\by_S +\nabla\p_S = \bef\ \text{ in } Q,\\[1.2ex]\div\by_S = 0 \ \text{ in } Q, \ \by_S = 0 \ \text{ on } \Sigma,\ \by_S(0) = \by_{S0} \text{ in } \Omega.\end{array}\right.
\label{E2.12}
\end{equation}
Now, we take $\by_N$ as the solution of
\begin{equation}
\left\{\begin{array}{l}\displaystyle\frac{\partial \by_N}{\partial t} -\nu\Delta\by_N + (\by_N \cdot \bna)\by_N + (\by_S \cdot \bna)\by_N + (\by_N \cdot \bna)\by_S + \nabla\p_N = -(\by_S\cdot\nabla)\by_S\ \text{ in } Q,\\[1.2ex]\div\by_N = 0 \ \text{ in } Q, \ \by_N = 0 \ \text{ on } \Sigma,\ \by_N(0) = \by_{N0} \text{ in } \Omega\end{array}\right.
\label{E2.13}
\end{equation}
The existence and uniqueness of the solution $\by_N \in \bWoT$ of the above system follows from Proposition \ref{T2.3} by taking $\nu_0 = 1$, $\be_1 = \be_2 = \by_S \in \bY$, and $\bg(t) = -B(\by_S(t),\by_S(t))$. As a consequence of Lemma \ref{L2.1} we have that $\bg \in L^2(I;\bHmo)$. Now, setting $\by = \by_N + \by_S$ and $\p = \p_N + \p_S$, and adding equations \eqref{E2.12} and \eqref{E2.13} we obtain that $\by \in \mY$, $\p \in W^{-1,q}(I;L^p(\Omega)/\mathbb{R}))$, and $(\by,\p)$ is a solution of \eqref{E2.1}. Moreover,  \eqref{E2.4} follows from \eqref{E7.6} to estimate $\bg$, \eqref{E2.7} and \eqref{E2.10}.

It remains to prove the uniqueness. Let $\by_1, \by_2 \in \mY$ and $\p_1, \p_2 \in W^{-1,q}(I;L^p(\Omega)/\mathbb{R})$ such that $(\by_1,\p_1)$ and $(\by_2,\p_2)$ are two solutions of \eqref{E2.1}. We take $(\by,\p) = (\by_2-\by_1,\p_2-\p_1)$. Subtracting the equations satisfied for both solutions we have
\begin{equation}
\left\{\begin{array}{l}\displaystyle\frac{\partial \by}{\partial t} -\nu\Delta\by + \nabla\p = -(\by_1 \cdot \nabla)\by - (\by\cdot\nabla)\by_2\ \text{ in } Q,\\[1.2ex]\div\by = 0 \ \text{ in } Q, \ \by = 0 \ \text{ on } \Sigma,\ \by(0) = 0 \text{ in } \Omega.\end{array}\right.
\label{E2.14}
\end{equation}

The right hand side of the above equation can be written in the form $\bg = -B(\by_1,\by) - B(\by,\by_2)$, which belongs to $L^2(I;\bHmo)$ by Lemma \ref{L2.1}. Let us prove that $\by \in \bWoT$. First, we observe that due to the properties of $p$ and $q$, in particular $p < 2$ and $q > 2$, we have that $\bV \subset \bWp$ and $\bWpc \subset \bV$. This implies that $\bV' \subset \bWpc'$ and, hence, $\bWoT \subset \bWdpoT$ and $\bWqpoT \subset \bWdpoT$. Therefore, $\by \in \mY \subset \bWdpoT$ holds. Moreover, $\bWopc \subset \bHo$ yields $\bHmo \subset \bWmop$ and, consequently, $\bg \in L^2(I;\bWmop)$. We also have that $\p \in W^{-1,2}(I;L^p(\Omega)/\mathbb{R})$. Now, from Theorem \ref{T2.2} we infer that $(\by,\p)$ is the unique solution of \eqref{E2.14} in $\bWdpoT \times W^{-1,2}(I;L^p(\Omega)/\mathbb{R})$.

On the other hand, \eqref{E2.14} can be considered as a Stokes system with the right hand side belonging to $L^2(I;\bHmo)$. Hence, it is well known that there exists a unique element $(\hat\by,\hat\p) \in \bWoT \times W^{-1,\infty}(I;L^2(\Omega)/\mathbb{R})$ solution of \eqref{E2.14}. Using again that $\bWoT \times W^{-1,\infty}(I;L^2(\Omega)/\mathbb{R}) \subset \bWdpoT \times W^{-1,2}(I;L^p(\Omega)/\mathbb{R})$, we deduce from Theorem \ref{T2.2} that $(\hat\by,\hat\p) = (\by,\p)$. Thus, we have $\by \in \bWoT$. Therefore, we can multiply the equation \eqref{E2.14} by $\by$ and after integration by parts it yields
\begin{align*}
&\frac{1}{2}\frac{d}{dt}\|\by(t)\|^2_\bLd + \nu\|\by(t)\|^2_\bHo = -b(\by_1,\by,\by) - b(\by,\by_2,\by)\\
& = - b(\by,\by_2,\by) \le \|\by_2\|_\bHo\|\by\|^2_\bLf \le C\|\by_2\|_\bHo\|\by\|_\bLd\|\by\|_\bHo\\
& \le \frac{\nu}{2}\|\by\|^2_\bHo + \frac{C^2}{2\nu}\|\by_2\|^2_\bHo\|\by\|^2_\bLd.
\end{align*}
From this inequality we deduce that
\[
\frac{d}{dt}\|\by(t)\|^2_\bLd \le \frac{C^2}{\nu}\|\by_2\|^2_\bHo\|\by\|^2_\bLd.
\]
Since $\by(0) = 0$, we infer from Gronwall's inequality that $\by = 0$, and with \eqref{E2.14} $\p$ is the zero element of $W^{-1,\infty}(I;L^p(\Omega)/\mathbb{R})$.
\end{proof}

\begin{rmrk}
Let us observe that $\mY \subset L^4(I;\bLf)$. Indeed, given $\by \in \mY$, we can write it in the form $\by = \by_N + \by_S$ with $\by_N \in \bWoT$ and $\by_S \in \bWqpoT$. Using a Gagliardo inequality we obtain for almost every $t \in (0,T)$
\[
\|\by_N(t)\|^4_\bLf \le C_4\|\by_N(t)\|^2_\bLd\|\by_N(t)\|^2_\bHo \le C_4\|\by_N\|^2_{L^\infty(I;\bLd)}\|\by_N(t)\|^2_\bHo.
\]
The embeddings $\bWoT \subset L^2(I;\bHo)$ and $\bWoT \subset L^\infty(I;\bLd)$ and the above inequality imply $\by_N \in L^4(I;\bLf)$. On the other hand, since $\bWqpoT \subset L^q(I;\bWp) \subset L^q(I;\bLf) \subset L^4(I;\bLf)$,  recall \eqref{E2.2}, we infer that $\by_S \in L^4(I;\bLf)$.
\label{R2.2}
\end{rmrk}

The solution of \eqref{E1.1} enjoys a better regularity than the one established in the previous remark for $q \ge 8$ and under additional assumption on $\by_0$.

\begin{thrm}
Let us assume that $q \ge 8$ and $\by_0 = \by_{N0} + \by_{S0} \in \mathbf{B}_{2,4}(\Omega) + \bBqp$. Then the variational solution $\by$ of \eqref{E1.1} belongs to $L^q(I;\bLf)$ and depends continuously in this topology on $\bef$ and $\by_0$. Moreover, the estimate
\begin{equation}
\|\by\|_{L^q(I;\bLf)} \le \eta_q\Big(\|\bef\|_{L^q(I;\bWmop)} + \|\by_{S0}\|_\bBqp + \|\by_{N0}\|_{\mathbf{B}_{2,4}(\Omega)}\Big)
\label{Estimate}
\end{equation}
holds for an increasing monotone function $\eta_q:[0,\infty) \longrightarrow [0,\infty)$  independent of $\bef$ and $\by_0$, with $\eta_q(0) = 0$.
\label{T2.6}
\end{thrm}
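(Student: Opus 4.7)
The plan is to build on the decomposition $\by=\by_N+\by_S$ from the proof of Theorem~\ref{T2.1}. The Stokes component is immediate: Theorem~\ref{T2.2} gives $\by_S\in\bWqpoT\hookrightarrow L^q(I;\bWp)$, and the two-dimensional Sobolev embedding $\bWp\hookrightarrow\bLf$ (valid because $p\ge 4/3$) yields $\by_S\in L^q(I;\bLf)$ with a bound in terms of $\|\bef\|_{L^q(I;\bWmop)}+\|\by_{S0}\|_\bBqp$. The content of the theorem is therefore the improvement of the a priori $L^4(I;\bLf)$-regularity of $\by_N$ (Remark~\ref{R2.2}) to $L^q(I;\bLf)$.

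For $\by_N$ I would derive an $L^4$-in-space energy estimate. After a regularization (Galerkin approximation or time mollification), test \eqref{E2.13} for $\by_N$ against $|\by_N|^2\by_N$; the pressure is removed via the Leray projection, or equivalently by subtracting the gradient part of this test function and checking that the resulting correction is lower order. Since $\by_N$ and $\by_S$ are divergence free, the trilinear integrals $\int(\by_N\cdot\nabla)\by_N\cdot|\by_N|^2\by_N\,dx$ and $\int(\by_S\cdot\nabla)\by_N\cdot|\by_N|^2\by_N\,dx$ both vanish, while the diffusion yields the nonnegative dissipation $\nu\int|\by_N|^2|\nabla\by_N|^2\,dx+\tfrac{\nu}{2}\int\bigl|\nabla|\by_N|^2\bigr|^2\,dx$. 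What remain are the cross terms $\int(\by_N\cdot\nabla)\by_S\cdot|\by_N|^2\by_N\,dx$ and $\int(\by_S\cdot\nabla)\by_S\cdot|\by_N|^2\by_N\,dx$, which I would bound via Hölder's inequality, the two-dimensional Gagliardo--Nirenberg inequality applied to $\by_N$ or to $|\by_N|^2$, and Young's inequality, absorbing the top-order contributions into the dissipation. The factors involving $\by_S$ are controlled by $\|\by_S\|_{L^q(I;\bLf)}$ and $\|\nabla\by_S\|_{L^q(I;\bLp)}$, and the Hölder exponents in time close precisely because $q\ge 8$. The outcome is a differential inequality
\[
\frac{d}{dt}\|\by_N(t)\|^4_\bLf \le \alpha(t)\,\|\by_N(t)\|^4_\bLf+\beta(t),\qquad \alpha,\beta\in L^1(I),
\]
whose $L^1$-norms of $\alpha$ and $\beta$ depend monotonically on $\|\by_S\|_{\bWqpoT}$. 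Combined with the embedding $\mathbf{B}_{2,4}(\Omega)\hookrightarrow\bLf$, which controls $\|\by_N(0)\|_\bLf$ by $\|\by_{N0}\|_{\mathbf{B}_{2,4}(\Omega)}$, Gronwall's lemma then delivers $\by_N\in L^\infty(I;\bLf)$, hence $\by_N\in L^q(I;\bLf)$, with the monotone dependence claimed in \eqref{Estimate}.

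The main obstacle lies in closing the cross-term estimate with an integrable coefficient $\alpha(t)$: because the Stokes data force only $p<2$, the gradient $\nabla\by_S$ is controlled only in $L^q(I;\bLp)$, and balancing this against $|\by_N|^2\by_N$ requires recovering a high power of $\by_N$ via interpolation between $L^\infty(I;\bLf)$ (the target itself) and $L^2(I;\bHo)$ (available from $\by_N\in\bWoT$). This is exactly where the threshold $q\ge 8$ is used. A secondary point is the pressure contribution arising because $|\by_N|^2\by_N$ is not solenoidal; standard handling via the Leray projection takes care of it.
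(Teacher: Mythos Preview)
Your decomposition $\by=\by_N+\by_S$ and the treatment of $\by_S$ match the paper. The route for $\by_N$, however, is fundamentally different from the paper's and contains a real obstacle.

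The paper never tests \eqref{E2.13} against a nonlinear function of $\by_N$. Instead it rewrites \eqref{E2.13} as a linear Stokes problem with right–hand side $\bg_{\bz}$ quadratic in $(\by_S,\bz)$, applies Theorem~\ref{T2.2} with $r=4$, $s=2$ to get the solution map into $\mathbf{W}_{4,2}(0,T)$, and then uses the interpolation embedding $\mathbf{W}_{4,2}(0,T)\hookrightarrow C([0,T];\bLf)$ together with a compact embedding into $L^\rho(I;\bLf)$, $\rho>8$, to run a Schauder fixed point on a short interval. Global existence follows by a continuation argument in which the already known bound $\by_N\in L^4(I;\bLf)$ (Remark~\ref{R2.2}) excludes blow–up. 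The threshold $q\ge 8$ enters through the requirement $\bg_{\bz}\in L^4(I;\bHmo)$, i.e.\ $\by_S\in L^8(I;\bLf)$.

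Your proposed $L^4$ energy estimate fails at the pressure, and the remark that the Leray projection ``takes care of it'' glosses over a genuine difficulty specific to bounded domains with Dirichlet conditions. If you test the unprojected equation with $|\by_N|^2\by_N$, the term $-\int_\Omega\p_N\,\div(|\by_N|^2\by_N)\,dx$ does not vanish, and the available regularity $\p_N\in W^{-1,\infty}(I;L^2(\Omega)/\mathbb{R})$ is far too weak to control it; the harmonic part of the pressure carries Neumann data $\nu(\Delta\by_N)\cdot\mathbf{n}$ on $\Gamma$, which is not bounded by the dissipation. If instead you test with the Helmholtz projection $\bw=P(|\by_N|^2\by_N)$, then $\bw$ is divergence free but in general $\bw\notin\bHo$: writing $|\by_N|^2\by_N=\bw+\nabla\phi$, one has $\bw|_\Gamma=-\nabla_\tau\phi$, which is typically nonzero. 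Hence $\bw\notin\bV$, and it is not an admissible test function in the variational formulation~\eqref{E2.11}. Galerkin regularization does not rescue this: $|\by_k|^2\by_k$ is smooth, but neither it nor its projection lies in $\bV_k$, so the Galerkin identity \eqref{E4.1} cannot be evaluated at it, and the cancellations you invoke for the trilinear terms are lost as soon as you project. This is precisely why $L^p$–in–space regularity for Navier–Stokes on bounded domains is obtained through the Stokes semigroup and maximal regularity rather than by energy methods; your scheme would work on $\mathbb{R}^2$ or $\mathbb{T}^2$, where $\p_N$ is given by Riesz transforms and obeys $\|\p_N\|_{L^2}\le C\|\by\|_{\bLf}^2$, but not here.
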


\begin{proof}
As in the proof of Theorem \ref{T2.1}, we decompose the equation \eqref{E1.1} in two systems, namely \eqref{E2.12} and \eqref{E2.13}. The solution $\by_S$ of \eqref{E2.12} belongs to $\bWqpoT \subset L^q(I;\bLf)$ due to the assumption \eqref{E2.2} on $p$. We prove that $\by_N \in C([0,T];\bLf)$ if $q = 8$. To this end we follow a fixed point approach. Given $\bz \in L^8(I;\bLf)$ we consider the equation
\begin{equation}
 \left\{\begin{array}{l}\displaystyle\frac{\partial \by}{\partial t} -\nu\Delta\by + \nabla\p = \bg_{\bz}\ \text{ in } Q,\\[1.2ex]\div\by = 0 \ \text{ in } Q, \ \by = 0 \ \text{ on } \Sigma,\ \by(0) = \by_{N0} \text{ in } \Omega,\end{array}\right.
 \label{E2.15}
 \end{equation}
 where
 \begin{equation}
 \bg_{\bz} = -(\by_S\cdot\nabla)\by_S - (\bz \cdot \bna)\bz - (\by_S \cdot \bna)\bz - (\bz \cdot \bna)\by_S.
 \label{E2.16}
 \end{equation}
 It is immediate to check that
 \begin{equation}
 \|\bg_{\bz}\|_{L^4(I;\bHmo)} \le (\|\by_S\|_{L^8(I;\bLf)} + \|\bz\|_{L^8(I;\bLf)})^2.
 \label{E2.17}
 \end{equation}
Then, Theorem \ref{T2.2} implies that the solution $\by_{\bz}$ of \eqref{E2.15} belongs to $\mathbf{W}_{4,2}(0,T)$ and satisfies
\begin{equation}
\|\by_{\bz}\|_{\mathbf{W}_{4,2}(0,T)} \le C_{4,2}\Big(\|\bg_{\bz}\|_{L^4(I;\bHmo)} + \|\by_{N0}\|_{\mathbf{B}_{2,4}(\Omega)}\Big).
\label{E2.18}
\end{equation}
We apply \cite[Theorem 3]{Amann2001} with
\[
\frac{1}{8} < s < \frac{1}{4}\ \text{ and } \ \theta = \frac{3}{4}
\]
to deduce that $\mathbf{W}_{4,2}(0,T) \subset L^\rho(I;\mathbf{H}^{\frac{1}{2}}(\Omega)) \subset L^\rho(I;\bLf)$ with $\rho = \frac{4}{1 - 4s}$. The choice of $s$ implies that $\rho > 8$. Moreover, the first embedding is compact. Using \cite[Th. III-4.10.2]{Amann1995} we also have that $\mathbf{W}_{4,2}(0,T) \subset C([0,T];(\bHmo,\bHo)_{\frac{3}{4},4})$.
Note that  $(\bHmo,\bHo)_{\frac{3}{4},4}  \subset   \bLf$. Indeed, by \cite[pg.186, 317]{Triebel1978} we have
$(\mathbf{H}^{-1}(\Omega),\mathbf{H}^{1}(\Omega))_{\frac{3}{4},4}=\mathbf{B}_{2,4}^{\frac{1}{2}}(\Omega)$,
where $\mathbf{B}_{2,4}^{\frac{1}{2}}(\Omega)$ denotes a Besov space.  Further we have the embedding $\mathbf{B}_{2,4}^{\frac{1}{2}}(\Omega)\subset \mathbf{L}^4(\Omega)$
\cite[pg.328]{Triebel1978}. Since $\mathbf{H}_0^1(\Omega) \subset \mathbf{H}^1(\Omega)$, the inclusion $(\mathbf{H}^{-1}(\Omega), \mathbf{H}_0^1(\Omega))_{3/4,4} \subset (\mathbf{H}^{-1}(\Omega),\mathbf{H}^1(\Omega))_{3/4,4}$ follows. Combining these facts  we find $(\mathbf{H}^{-1}(\Omega),\mathbf{H}_0^{1}(\Omega))_{\frac{3}{4},4}\subset \mathbf{L}^4(\Omega)$, and  $\mathbf{W}_{4,2}(0,T)  \subset C([0,T];\bLf)$ follows.
This embedding, \eqref{E2.18} and H\"older's inequality imply for $0 < \bar t \le T$
\begin{align*}
&\|\by_{\bz}\|_{L^8(0,\bar t;\bLf)} \le \bar t^{1/8}\|\by_{\bz}\|_{C([0,\bar t];\bLf)} \le C_1\bar t^{1/8}\Big(\|\bg_{\bz}\|_{L^4(0,\bar t;\bHmo)} + \|\by_{N0}\|_{\mathbf{B}_{2,4}(\Omega)}\Big)\\
&\le C_1\bar t^{1/8}\Big[(\|\by_S\|_{L^8(I;\bLf)} + \|\bz\|_{L^8(0,\bar t;\bLf)})^2 + \|\by_{N0}\|_{\mathbf{B}_{2,4}(\Omega)}\Big].
\end{align*}
Let us take $R > 0$ and $\bz$ any element of the closed ball $\bar B_R(0)$ of $L^8(0,\bar t;\bLf)$. Then for some $0 < \bar t \le T$ small enough we obtain from the above inequality
\[
\|\by_{\bz}\|_{L^q(0,\bar t;\bLf)} \le C_1\bar t^{1/8}\Big[(\|\by_S\|_{L^8(I;\bLf)} + R)^2 + \|\by_{N0}\|_{\mathbf{B}_{2,4}(\Omega)}\Big] \le R.
\]
Hence, we have a compact mapping $\bz \in \bar B_R(0) \to \by_{\bz} \in \bar B_R(0)$. From Schauder's fixed point theorem we infer the existence of a fixed point. Since the solution of \eqref{E1.1} is unique, this fixed point must be $\by_N$ and, consequently, $\by_N$ belongs to $L^8(0,\bar t;\bLf)$. From \eqref{E2.15} with $\bg_\bz$ replaced by $\bg_\by$ we infer that $\by \in \mathbf{W}_{4,2}(0,\bar t)$ and satisfies \eqref{E2.18}. Therefore, there exists a maximal time $T^* \le T$ and a solution in the space $\mathbf{W}_{4,2}$. We know there are two possibilities: either $T^* = T$ and the theorem is proved, or $T^* < T$ and
\[
\lim_{\bar t \to T^*}\|\by_N\|_{\mathbf{W}_{4,2}(0,\bar t)} = \infty \ \text{ and } \ \|\by_N\|_{\mathbf{W}_{4,2}(0,\bar t)} < \infty \ \forall \bar t < T^*.
\]

Let us prove that the second option can not occur. Given $\varepsilon > 0$, we know from Remark \ref{R2.2} that there exists $t_\varepsilon > 0$ close enough to $T^*$ so that
\[
\|\by_N\|_{L^4(t_\varepsilon,T^*;\bLf)} < \varepsilon.
\]
From \eqref{E2.18} and \eqref{E2.17} with $\bz$ replaced by $\by$, and continuous embedding $\mathbf{W}_{4,2}(t_\varepsilon,\bar t) \subset C([t_\varepsilon,\bar t];\bLf)$ we get for every $\bar t \in (t_\varepsilon,T^*)$ and any $\varepsilon > 0$
\begin{align*}
&\|\by_N\|_{\mathbf{W}_{4,2}(t_\varepsilon,\bar t)} \le C_{4,2}\Big((\|\by_S\|_{L^8(t_\varepsilon,\bar t;\bLf)} + \|\by_N\|_{L^8(t_\varepsilon,\bar t;\bLf)})^2 + \|\by_N(t_\varepsilon)\|_{\mathbf{B}_{2,4}(\Omega)}\Big)\\
&\le C_{4,2}\Big((\|\by_S\|_{L^8(0,T;\bLf)} + \|\by_N\|^{1/2}_{C([t_\varepsilon,\bar t];\bLf)}\|\by_N\|^{1/2}_{L^4(t_\varepsilon,\bar t;\bLf)})^2 + \|\by_N(t_\varepsilon)\|_{\mathbf{B}_{2,4}(\Omega)}\Big)\\
&\le C_{4,2}\Big((\|\by_S\|_{L^8(0,T;\bLf)} + \varepsilon^{1/2}\|\by_N\|_{C([t_\varepsilon,\bar t];\bLf)}^{1/2})^2 + \|\by_N(t_\varepsilon)\|_{\mathbf{B}_{2,4}(\Omega)}\Big)\\
&\le C_{4,2}\Big((\|\by_S\|_{L^8(0,T;\bLf)} + C_1\varepsilon^{1/2}\|\by_N\|_{\mathbf{W}_{4,2}(t_\varepsilon,\bar t)}^{1/2})^2 + \|\by_N(t_\varepsilon)\|_{\mathbf{B}_{2,4}(\Omega)}\Big).
\end{align*}
Selecting $\varepsilon = [4C_{4,2}C_1]^{-1}$, we deduce from the above inequality
\[
\|\by_N\|_{\mathbf{W}_{4,2}(t_\varepsilon,\bar t)} \le 2C_{4,2}\Big(2\|\by_S\|^2_{L^8(0,T;\bLf)} + \|\by_N(t_\varepsilon)\|_{\mathbf{B}_{2,4}(\Omega)}\Big) \quad \forall \bar t \in (t_\varepsilon,T^*),
\]
which proves that the explosion is not possible.

To prove estimate \eqref{Estimate} we proceed as above to obtain
\begin{align*}
&\|\by_N\|_{\mathbf{W}_{4,2}(0,T)}\\
&\le C_{4,2}\Big((\|\by_S\|_{L^8(0,T;\bLf)} + \|\by_N\|_{C([0,T];\bLf)}^{1/2}\|\by_N\|^{1/2}_{L^4(0,T;\bLf)})^2 + \|\by_{N0}\|_{\mathbf{B}_{2,4}(\Omega)}\Big)\\
&\le C_{4,2}\Big(2\|\by_S\|^2_{L^8(0,T;\bLf)} + 2\|\by_N\|_{C([0,T];\bLf)}\|\by_N\|_{L^4(0,T;\bLf)} + \|\by_{N0}\|_{\mathbf{B}_{2,4}(\Omega)}\Big)
\end{align*}
Applying \cite[Corollary 4]{Simon87} with $X = \mathbf{W}^{\frac{1}{8},4}(\Omega)$, $B = \bLf$, $Y = \bHmo$, and $r = 2$ we infer the compactness of the embedding $\mathbf{W}_{4,2}(0,T) \subset C([0,T],\bLf)$. Here we used that $\mathbf{W}_{4,2}(0,T) \subset  C([0,T],\bLf)$, which was established above. Then, using Lions lemma with
\[
\varepsilon = \frac{1}{4C_{4,2}\|\by_N\|_{L^4(0,T;\bLf)}}
\]
we deduce from the above inequality
\begin{align*}
&\|\by_N\|_{\mathbf{W}_{4,2}(0,T)}\\
& \le 2C_{4,2}\Big(\|\by_S\|_{L^8(0,T;\bLf)} + 2C_\varepsilon\|\by_N\|_{L^4(0,T;\bLf)}\|\by_N\|_{L^2(0,T;\bHmo)} + \|\by_{N0}\|_{\mathbf{B}_{2,4}(\Omega)}\Big).
\end{align*}
Estimate \eqref{Estimate} follows from \eqref{E2.7}, the above inequality, and the estimate obtained in Remark \ref{R2.2} for $\|\by_N\|_{L^4(0,T;\bLf)}$. Finally, the continuous dependence of $\by$ with respect to $\bef$ and $\by_0$ can be proved using Theorem \ref{T2.2} and \eqref{Estimate}.
\end{proof}
For the above proof the $L^\infty(I;\bLf)$ regularity of the solution of \eqref{E2.15} is crucial. This is obtained if $q \ge 8$.

\section{Proof of Theorem \ref{T2.2}}
\label{S3}
\setcounter{equation}{0}
We separate the proof in several steps.

{\em (i) Notation and preliminaries.} Let $\bLss$  denote the closure   in $\bLs$ of  the space $\{\bphi\in \mathbf{C}^\infty_0(\Omega):\text{div}\,  \bphi =0 \}$. Given $\bef\in \bLs$ we define the Helmholtz projection $P_s: \bLs \to \bLs$ by $P_s \bef =\bef -\nabla H$, where $ \Delta H = \text{div } \bef$ in $\Omega$, and $\bf{n}\cdot(\nabla H-\bef) = 0$ with $\bf{n}$ equal to the unit outer normal vector to $\Gamma$. We have that $\text{range} (P_s)= \bLss$, $P^2_s=P_s$, and $P'_s=P_{s'}$, for the dual operator. The Stokes operator in $\bLss$ is defined by
\begin{equation}
A_s = -\nu P_s \Delta: \mD(A_s)=  \bWs\cap\mathbf{W}^{2,s}(\Omega)\to \bLss.
\label{E3.1}
\end{equation}
It is a closed bijective operator when considered on the dense domain $\mD(A_s)\subset  \bLss$.  This operator enjoys maximal parabolic regularity, \cite{GHHSS2010}, \cite{Giga-Sohr81}, \cite[page 147]{HS2018}. More precisely, for every $(\tilde\bg,\tilde\by_0) \in {L}^r(I;\bLss) \times (\bLss,\bWs\cap\mathbf{W}^{2,s}(\Omega))_{1 - \frac{1}{r},r}$ the equation
\begin{equation}
\left\{\begin{array}{l}\displaystyle\frac{\partial \tilde \by}{\partial t}(t) + A_s \tilde \by(t) = \tilde \bg(t) \text{ for a.a. } t \in (0,T), \\[1.2ex]
\by(0) = \tilde\by_0,
\end{array}\right.
\label{E3.2}
\end{equation}
has a unique solution $\tilde\by \in L^r(I;\mathbf{W}^{2,s}(\Omega)\cap\bWs) \cap W^{1,r}(I;\bLss)$. Moreover, the inequality
\begin{equation}
\|\frac{\partial \tilde \by}{\partial t}\|_{L^r(I;\bLs)}+  \|\tilde \by\|_{L^r(I;\mathbf{W}^{2,s}(\Omega))}    \le C\Big(\|\tilde \bg\|_{L^r(I;\bLs)} + \|\tilde\by_0\|\Big)
\label{E3.3}
\end{equation}
holds for some $C$ independent of $(\tilde\bg,\tilde\by_0)$. Above the norm of $\tilde\by_0$ is taken in the interpolation space $(\bLss,\bWs\cap\mathbf{W}^{2,s}(\Omega))_{1 - \frac{1}{r},r}$.

The fractional power  ${A_s^{\frac{1}{2}}} : \mD({A_s^{\frac{1}{2}}})\subset \bLss \to \bLss$, is well-defined with $\mD({A_s^{\frac{1}{2}}})$ dense in $\bLss$ and the identity $A_s = A_s^{\frac{1}{2}}A_s^{\frac{1}{2}}$ holds. $A_s^{\frac{1}{2}}$ is an isomorphism when $\mD(\Asq)$ is endowed with the graph norm of $\Asq$. The graph norm $\|\Asq \by\|_{\bLs}$ is equivalent to the norm $\|\by\|_{\bWos}$ on $\mD(\Asq)$. Moreover, the norms $\| A_s \by \|_{\bLs}$ and $\|\by\|_{\mathbf{W}^{2,s}(\Omega)}$ are equivalent on $\mD(A_s)$, see e.g. \cite{FGS2006}. We shall use in an essential manner that
\begin{equation}
\mD(\Asq)=\mD((-\Delta)^{\frac{1}{2}})\cap \bLss =\bWs,
\label{E3.4}
\end{equation}
and $-\Delta$ is understood with homogenous boundary conditions. This was verified in \cite{Giga1985}, for domains with a
'smooth' boundary.
Using the classical result in \cite{Fujiwara1968} on the characterisation of
$\mD((-\Delta^{\frac{1}{2}}))$, we obtain that $\mD(\Asq)$ is isomorphic to $\bWs$. The case of a $C^3$ boundary can be argued as follows. First we use that $[\bLss, \mD(A_s)]_{\frac{1}{2}} =   [\bLss,  \bWs\cap\mathbf{W}^{2,s}(\Omega)]_{\frac{1}{2}}  = \mD(\Asq) $, where $[\cdot,\cdot]_{\frac{1}{2}}$ denotes complex interpolation. The second equality  follows from the fact that the Stokes operator on $\bLss$ admits an $H^\infty$ calculus \cite{NS2003}, see also \cite[pg. 149]{HS2018}. Next we note that the Helmhotz projection satisfies $P_s= P_s P_s$ and that the range of $P_s$ is given by
$\bLss$. Hence $\bLss$ is a complemented subspace of $\mathbf{L}^s(\Omega)$, see \cite[pg.~22]{Triebel1978}.  Using standard regularity results for the Stokes equation \cite{AG1991} it follows from the definition of $P_s$, that it is a bounded linear operator from $\bLs$ to $\bLss$ and from $\mathbf{W}^{2,s}(\Omega)$ to $\bWs\cap\mathbf{W}^{2,s}(\Omega)$. As a consequence we obtain that $[\bLss,  \bWs\cap\mathbf{W}^{2,s}(\Omega)]_{\frac{1}{2}} = \bWs$, by a general result on interpolation couples involving  subspaces, see \cite[pg.118]{Triebel1978}.

{\em(ii) Extending the operator $A_s$.} In the following we use arguments inspired by \cite[\S 6]{DER2017} and \cite[\S 11]{ABHR2015} where the case of second order elliptic  operators is considered. First, note that we
 can utilize the above arguments for $s$ replaced by its conjugate $s'$. Hence $A^{\frac{1}{2}}_{s'}: \bWsc \to \bLscs$  is a topological isomorphism, and  its adjoint $(\Asqc)':\bLss \to \bWsc'$ is an isomorphism too. We also have
\begin{equation}\label{E3.5}
(\Asqc)'\by =\Asq \by \text{ for all } \by \in \mD(\Asq)  = \bWs,
\end{equation}
where equality holds in $\bWsc'$. Indeed, for all $\by \in \bWs$ and $\bz \in \bWsc$ we get
\begin{equation*}
\begin{array}l
\langle (\Asqc)'\by, \bz\rangle_{\bWsc' ,\bWsc }
= \langle \by, \Asqc \bz\rangle _{\bLss,\bLscs}\\[1.5ex]
=\langle \Asq \by,  \bz\rangle _{\bLss,\bLscs}=\langle \Asq \by, \bz\rangle_{ \bWsc',\bWsc},
\end{array}
\end{equation*}
where the identity $\langle \by, \Asqc \bz\rangle _{\bLss,\bLscs} = \langle \Asq \by,  \bz\rangle _{\bLss,\bLscs}$ for all $\by \in \mD(\Asq) = \bWs$ and all $\bz \in \mD(\Asqc) = \bWsc$ is well known, see eg.\cite[equality (1.23)]{GSS2005}.
Thus \eqref{E3.5} holds. Since $\bWs$ is dense in $\bLss$ we obtain that $(\Asqc)'$ is the extension of $\Asq$ from $\bWs $ to $\bLss$. In a similar way we can argue that $ ((\Asqc)')^{-1}$ is an extension of $(\Asq)^{-1}$ from $\bLss$
to $\bWsc'$.

Now, we define the operator $\mA_{s} \in \mL(\bWs,\bWsc')$ by
\begin{equation*}
\langle \mA_s\by,\bz \rangle_{\bWsc', \bWsc} = \nu \int_\Omega \nabla \by:\nabla \bz \, dx.
\end{equation*}
Let us study some properties of this operator. First, we observe that from the definitions of $A_s$ and $\mA_s$ it follows that $A_s\by = \mA_s\by$ for all $\by \in \mD(A_s)$. Hence, $\mA_s$ is an extension of $A_s$.

Now, given $\by \in \mD(A_2) \cap \mD(\Asq)$ and $\bz \in \mD(A_2) \cap \mD(\Asqc)$, we have
\begin{align*}
&\langle (\Asqc)'\Asq \by, \bz\rangle_{\bWsc',\bWsc} = \langle \Asq \by,\Asqc \bz\rangle_{\bLss,\bLscs} =\langle A_2^{\frac{1}{2}}  \by, A_2^{\frac{1}{2}} \bz\rangle_{L^2_\sigma(\Omega),L^2_\sigma(\Omega)}\\
& =\langle A_2 \by, \bz\rangle_{L^2_\sigma(\Omega),L^2_\sigma(\Omega)}= \nu \int_\Omega \nabla \by:\nabla \bz \, dx = \langle \mA_s \by,
\bz \rangle_{\bWsc', \bWsc},
\end{align*}
where we have used that $A^{\frac{1}{2}}_{s_1}\by = A^{\frac{1}{2}}_{s_2}\by$ for every $\by \in \mD(A_{s_1}^{\frac{1}{2}}) \cap \mD(A_{s_2}^{\frac{1}{2}})$ and $1 < s_1, s_2 < \infty$; see \cite[Equality (1.30)]{GSS2005}. By density of $\mD(A_{2}) \cap \mD(\Asq)$ in $\mD(\Asq)$ and $\mD(A_{2}) \cap \mD(\Asqc)$ in $\mD(\Asqc)$ we have for all $\by\in \bWs$, and $\bz\in \bWsc$
\begin{equation}
\langle (\Asqc)'  \Asq \by,\bz\rangle_{\bWsc',\bWsc}=\langle \mA_s \by,
\bz \rangle_{\bWsc', \bWsc},
\label{E3.6}
\end{equation}
and consequently
$(\Asqc)' \Asq \by =  \mA_s\by$ for all  $\by \in \mD(A_s^{\frac{1}{2}}) = \bWs$. Since the operators $\Asq:\bWs \longrightarrow \bLss$ and $(\Asqc)':\bLss \longrightarrow \bWsc'$ are isomorphisms, the previous identity implies that $\mA_s:\bWs \longrightarrow \bWsc'$ is also an isomorphism. Moreover, taking into account that $\Asq\by \in \bWs$ for all $\by \in \mD(A_s)$ and $A_s = \Asq\Asq$, we have
\begin{equation}\label{E3.7}
(\Asqc)' A_s \by = \mA_s \Asq \by  \quad \forall\by \in \mD(A_s).
\end{equation}

{\em (iii) Maximal regularity on $\bWsc'$}. Given $(\bg,\by_{S0}) \in L^r(I;\bWmos) \times \bBrs$ we define $\tilde\bg(t) = ((\Asqc)')^{-1}\bg(t)$ and $\tilde\by_0 = ((\Asqc)')^{-1}\by_{S0}$, hence $\tilde\bg \in L^r(I;\bLss)$ and
\begin{align*}
&\tilde\by_0 = ((\Asqc)')^{-1}\by_{S0} \in ((\Asqc)')^{-1}(\bBrs) = ((\Asqc)')^{-1}\Big((\bWsc',\bWs)_{1 - \frac{1}{r},r}\Big)\\
& = \Big(((\Asqc)')^{-1}(\bWsc'),((\Asqc)')^{-1}(\bWs)\Big)_{1 - \frac{1}{r},r}  = (\bLss,\mathbf{W}^{2,s}(\Omega) \cap \bWs)_{1 - \frac{1}{r},r}.
\end{align*}
Observe that $\bWmos \subset \bWsc'$, therefore $\tilde\bg(t)$ is well defined. Let $\tilde\by \in L^r(I;\mathbf{W}^{2,s}(\Omega)\cap\bWs) \cap W^{1,r}(I;\bLss)$ be the unique solution  to \eqref{E3.2}. Now, we set $\by_S= (\Asqc)'\tilde\by$. Since $\tilde\by(t) \in \mD(A_s) \subset \bWs$ for almost all $t \in I$, from \eqref{E3.5} we know that $\by_S(t) = (\Asqc)'\tilde\by(t) = \Asq\tilde\by(t) \in \bWs$ for almost all $t \in I$. Hence, we have that $\by_S \in L^r(I;\bWs)$. Additionally, from $\frac{\partial\tilde\by}{\partial t} \in L^r(I;\bLss)$ we deduce that $\frac{\partial\by_S}{\partial t} \in L^r(I;\bWsc')$. Consequently, we have that $\by_s \in \bWrsoT$.

Multiplying the differential equation satisfied by $\tilde\by$ by the operator $(\Asqc)'$, using \eqref{E3.7}, and the identity $\by_S(t) = \Asq\tilde\by(t)$, we obtain
\[
\frac{\partial\by_S}{\partial t} = (\Asqc)'\frac{\partial\tilde\by}{\partial t} = -(\Asqc)'A_s\tilde\by + (\Asqc)'\tilde\bg = -\mA_s\Asq\tilde\by + \bg = -\mA_s\by_S + \bg.
\]
Moreover, $\by_S(0) = (\Asqc)'\tilde\by(0) = (\Asqc)'\tilde\by_0 = \by_{S0}$. Thus, we have
\begin{equation*}
\left\{\begin{array}{l}\displaystyle\frac{\partial \by_S}{\partial t} + \mA_s \by_S = \bg \text{ for a.a. } t \in (0,T), \\[1.2ex]
\by_S(0) = \by_{S0}.
\end{array}\right.
\end{equation*}
This is exactly \eqref{E2.6}. Finally, we infer from  \eqref{E3.3}
\begin{equation}\label{E3.8}
\|\by_S\|_\bWrsoT \le C (\|\bg\|_{L^r(I;(\bWsc'))} + \|\by_{S0}\|_\bBrs).
\end{equation}

{\em(iv) Introduction of the pressure.} Recall that $\by_S \in L^r(I;\bWs)$ and, hence, $\Delta\by_S \in L^r(I;\bWmos)$. We also note that $\by_S:[0,T] \longrightarrow \bBrs \subset \bWsc'$ is a continuous function. Thus, we may integrate the differential  equation satisfied by $\by_S$ to obtain for every $t\in I$ and all $\bpsi \in  \bWsc$
\begin{equation}
\langle\by_S(t)-\by_{S0}, \bpsi\rangle_{\bWsc',\bWsc} - \langle \nu\int^t_0  \Delta \by_S\,ds  +\int_0^t \bg \, ds\,, \,\bpsi \rangle_{\bWmos, \bWosc}=0.
\label{E3.9}
\end{equation}

Using that $\mA_s:\bWs \longrightarrow \bWsc'$ is an isomorphism, we deduce the existence of $\hat\by_{S0} \in \bWs$ such that for every $\bpsi \in  \bWsc$
\begin{align*}
&\langle\by_{S0},\bpsi\rangle_{\bWsc',\bWsc} = \langle\mA_s\hat\by_{S0},\bpsi\rangle_{\bWsc',\bWsc}\\
& = \nu\int_\Omega\nabla\hat\by_{S0} : \nabla\bpsi\, dx = -\nu\langle\Delta\hat\by_{S0},\bpsi\rangle_{\bWmos,\bWosc}.
\end{align*}
Moreover, using that $\by_S \in L^r(I;\bWos)$, \eqref{E3.9} can be written
\[
\Big\langle\by_s(t) - \nu\Delta\hat\by_{S0} - \nu\int^t_0  \Delta \by_S\,ds  - \int_0^t \bg \, ds\,, \,\bpsi \Big\rangle_{\bWmos, \bWosc}=0
\]
for all $\bpsi \in  \bWsc$ and almost all $t \in I$. Setting
\[
\mathbf{G}(t)= \by_S(t)- \nu\Delta\hat\by_{S0}  -\nu\int^t_0  (\Delta \by_S(s) + \bg(s)) \, ds,
\]
we have that $\mathbf{G}\in L^r(I ; \bWmos)$ and
\begin{equation}\label{E3.10}
\langle \mathbf{G}(t),\bpsi\rangle_{\bWmos, \bWosc} =0\quad \forall \bpsi \in \bWsc \text{ and for a.a. }t\in I
\end{equation}
holds. We next use  de Rham's theorem in $L^r(\bar I ; \bWmos)$; see \cite[Lemma IV-1.4.1]{Sohr2001}. It implies that there exists a unique element $\pi \in L^r(I;L^s(\Omega))$ with $\int_\Omega \pi(t)\, dx=0$ for almost all $t \in I$ such that $\mathbf{G} = -\nabla\pi$ in $I \times \Omega$.  Moreover, there exists a constant $C > 0$ such that
\begin{equation*}
\|\pi\|_{L^r(I;\bLs)} \le C\|G\|_{L^r(I;\bWmos)}
\end{equation*}
holds. Let us now set $\p_S= \frac{\partial \pi}{\partial t}$. Then $\p_S \in W^{-1,r}(I;L^s(\Omega)/\mathbb{R})$ and $(\by_S, \p_S)$ is the desired solution to \eqref{E2.5}.

{\em(v) Uniqueness.} Let us assume that $(\by_1,\p_1)$ and $(\by_2,\p_2)$ are two solution of \eqref{E2.5}. We set $\by = \by_2 - \by_1$ and $\p = \p_2 - \p_1$. Since $\by_1$ and $\by_2$ satisfy \eqref{E2.6}, we infer
\begin{equation}
\left\{\begin{array}{l}\displaystyle\frac{\partial \by}{\partial t} + \mA \by = 0 \text{ for a.a. } t \in (0,T), \\[1.2ex]
\by(0) = 0.
\end{array}\right.
\label{E3.11}
\end{equation}
Now we define $\tilde\by = [(\Asqc)']^{-1}\by = (\Asq)^{-1}\by \in D(A_s) = \mathbf{W}^{2,s}(\Omega) \cap \bWs$. Then, \eqref{E3.11} and \eqref{E3.7} yield
\[
\frac{\partial\tilde\by}{\partial t} = -[(\Asqc)']^{-1}\mA_s\by = -[(\Asqc)']^{-1}\mA_s\Asq\tilde\by = -[(\Asqc)']^{-1}(\Asqc)'A_s\tilde\by = -A_s\tilde\by.
\]
Therefore, $\tilde y$ is the solution of \eqref{E3.2} with $\tilde\bg = 0$ and $\tilde\by_0 = 0$. Hence, estimate \eqref{E3.3} implies that $\tilde\by = 0$ and, consequently, $\by = 0$ as well. Finally, subtracting the partial differential equations satisfied by $(\by_2,\p_2)$ and $(\by_1,\p_1)$, we infer that $\nabla\p = 0$. This implies that $\p$ is the zero element of $W^{-1,r}(I;L^s(\Omega)/\mathbb{R})$.

\begin{rmrk}\label{R3.1}
Recently interesting work has been carried out on the treatment of the semigroup
associated to   the Stokes equation on $\bLss$  under the assumption
that the boundary $\Gamma$ is only Lipschitz continuous. This  requires a
restriction on the range of the parameter $s$. Let us summarize how
these results can be utilized for our treatment of the Stokes equation
in $\bWmos$ if the assumption on the regularity of $\Gamma$ is relaxed
to that of  Lipschitz continuity.

Following \cite{Shen2012} we define the Stokes operator in $\bLss$ is
defined by
\begin{equation*}
A_s \by = -\nu \Delta \by + \nabla\p
\end{equation*}
with domain
\begin{equation*}
\mD(A_s)= \{\by \in \bWos:  \text{ div}\,  \by =0,  \; -\nu \Delta \by +
\nabla\p\in \bLss \text{ for some } \p \in L^s(\Omega) \}.
\end{equation*}
\end{rmrk}
We have decided to use the same notation as for the definition of $A_s$ given in \eqref{E3.1}. But this should
not be problematic since it is only used within this remark. It was
proved in \cite{Shen2012} that there exists an $\epsilon >0$ such that
for all $s$ satisfying
\begin{equation}
|\frac{1}{s} - \frac{1}{2}| < \frac{1}{2d} + \epsilon,
\label{E3.12}
\end{equation}
and all $d \ge 3$, the operator $A_s$ is sectorial with angle $0$ and with
$0$ in the resolvent set, and that $-A_s$ generates a bounded analytic
semigroup on $\bLss$. As a consequence it is densely defined and closed.
Moreover $\bWs\cap\mathbf{W}^{2,s}(\Omega)\subset \mD(A_s)$ and $A_s =
-\nu P_s \Delta$ on  $\bWs\cap\mathbf{W}^{2,s}(\Omega)$. In \cite
{Kunst2017} it was further verified that $\epsilon>0$ could further be
chosen such that $A_s$ has maximal  $\bLss$ regularity and that it
admits a bounded $H^\infty-$ calculus for all $s$ satisfying
\eqref{E3.12}. This implies that the fractional powers of  $A_s$ are
well-defined and that $[\bLss, \mD(A_s)]_{\frac{1}{2}}= \mD(\Asq)$
as in the first equality of  \eqref{E3.4} above.

The second equality in  \eqref{E3.4}, namely $\mD(\Asq)= \bWs$  was established in \cite{Tolks17} in the case of Lipschitz domains, see also \cite{Tolks18}.

Summarizing, there exists $\epsilon >0$ such that for all $s$ satisfying
\eqref{E3.12}, and all $d\ge 3$,
  the structural properties of Step (i) of the proof of Theorem
\ref{T2.2} hold. The assertion of Theorem  \ref{T2.2} can therefore be
obtained as before.  It appears to be the case that restriction $d\ge 3$
dated back to the work in \cite{Shen2012}, where the proof utilizes  the
fundamental system of the Stokes system.

\section{Proof of Proposition \ref{T2.3}}
\label{S4}
\setcounter{equation}{0}
\begin{proof}[Proof of Proposition \ref{T2.3}] Let us consider the classical operator associated with the Stokes system $A:\bV \longrightarrow \bV'$ given by $\langle A\bpsi,\bphi\rangle_{\bV',\bV} = a(\bpsi,\bphi)$ $\forall \bpsi, \bphi \in \bV$. As usual, we take a base $\{\bpsi_j\}_{j = 1}^\infty$ of $\bV$ formed by eigenfunctions of $A$: $A\bpsi_j = \lambda_j\bpsi_j$ with $\{\lambda_j\}_{j = 1}^\infty \subset (0,\infty)$, $j \ge 1$. We assume that $\{\bpsi_j\}_{j = 1}^\infty$ is orthonormal for the Hilbert product in $\bH$: $(\bpsi_i,\bpsi_j)_\bLd = \delta_{ij}$. Let us denote by $\bV_k$ the subspace generated by $\{\bpsi_1,\ldots,\bpsi_k\}$. Now we define the  orthogonal $\bLd$-projection operator $P_k:\bH \longrightarrow \bV_k$ given by
\[
P_k\bpsi = \sum_{j = 1}^k(\bpsi,\bpsi_j)_\bLd\bpsi_j.
\]
Following the classical Faedo-Galerkin approach, we discretize \eqref{E2.11}
\begin{equation}
\left\{
\begin{array}{l}
\displaystyle\frac{d}{dt}(\by_k(t),\bpsi_j)_\bLd + a(\by_k(t),\bpsi_j) + \nu_0b(\by_k(t),\by_k(t),\bpsi_j) + b(\be_1(t),\by_k(t),\bpsi_j) \vspace{2mm}\\
+ b(\by_k(t),\be_2(t),\bpsi_j) = \langle\bg(t),\bpsi_j\rangle_{\bV',\bV}\ \text{ in } (0,T),\ \ 1 \le j \le k,\\
\by_k(0) = \by_{0k},
\end{array}\right.
\label{E4.1}
\end{equation}
where
\[
\by_k(t) = \sum_{j = 1}^kg_{k,j}(t)\bpsi_j\quad \text{and}\quad \by_{0k} = P_k\by_{N0} = \sum_{j = 1}^k(\by_{N0},\bpsi_j)_\bLd\bpsi_j.
\]
It is clear that \eqref{E4.1} has a maximal solution $\by_k$ defined in an interval $I_k$. We shall derive a point-wise a-priori bound for $\by_k$ on $I$ for each $k$ from which $I_k=I$ follows. We will also prove a-priori estimates which allow to pass to the limit in \eqref{E4.1}.

{\em I - Estimates in $L^\infty(I;\bLd)$.} Multiplying the equation \eqref{E4.1} by $g_{k,j}(t)$ and adding from $j = 1$ to $k$ we infer
\begin{equation}
\frac{1}{2}\frac{d}{dt}\|\by_k(t)\|_{\bLd}^2 + a(\by_k(t),\by_k(t)) + b(\by_k(t),\be_2(t),\by_k(t)) = \langle\bg(t),\by_k(t)\rangle_{\bV',\bV},
\label{E4.2}
\end{equation}
where we have used the identities $b(\by_k(t),\by_k(t),\by_k(t)) = b(\be_1(t),\by_k(t),\by_k(t)) = 0$. Let us estimate the term $b(\by_k(t),\be_2(t),\by_k(t))$. To this end we write $\be_2 = \be_{2H} + \be_{2W}$ with $\be_{2H} \in L^2(I;\bV) \cap L^\infty(I;\bLd)$ and $\be_{2W} \in L^q(I;\bWp)$. Then, we have
\[
b(\by_k(t),\be_2(t),\by_k(t)) = b(\by_k(t),\be_{2H}(t),\by_k(t)) + b(\by_k(t),\be_{2W}(t),\by_k(t)).
\]
To estimate the first term we use Schwarz's inequality, Gagliardo inequality \eqref{E7.1} with $r = 4$, and Young's inequality as follows
\begin{align}
&|b(\by_k(t),\be_{2H}(t),\by_k(t))| \le \|\by_k(t)\|^2_{\bLf}\|\be_{2H}(t)\|_{\bHo}\notag\\
&\le C_4\|\by_k(t)\|_{\bLd}\|\by_k(t)\|_\bHo\|\be_{2H}(t)\|_{\bHo}\notag\\
&\le \frac{\nu}{8}\|\by_k(t)\|_\bHo^2 + \frac{2C_4^2}{\nu}\|\by_k(t)\|^2_\bLd\|\be_{2H}(t)\|^2_{\bHo}. \label{E4.3}
\end{align}
For the second term, we apply H\"older's inequality, Gagliardo inequality with $r = 2p'$, and Young's inequality to get
\begin{align}
&|b(\by_k(t),\be_{2W}(t),\by_k(t))| \le \|\by_k(t)\|^2_{\mathbf{L}^{2p'}(\Omega)}\|\be_{2H}(t)\|_{\bWp}\notag\\
&\le C_{2p'}\|\by_k(t)\|^{\frac{2}{p'}}_{\bLd}\|\by_k(t)\|^{\frac{2}{p}}_\bHo\|\be_{2H}(t)\|_{\bWp}\notag\\
&\le \frac{\nu}{8}\|\by_k(t)\|_\bHo^2 + \frac{2C^2_{2p'}}{\nu}\|\by_k(t)\|^2_\bLd\|\be_{2W}(t)\|^{p'}_{\bWp}. \label{E4.4}
\end{align}
Since $p \ge \frac{4}{3}$, then $p' \le 4 < q$ holds. Therefore, the function $t \to \|\be_{2W}(t)\|^{p'}_{\bWp}$ is integrable in $[0,T]$. From \eqref{E4.3} and \eqref{E4.4} we infer
\[
|b(\by_k(t),\be_2(t),\by_k(t))| \le \frac{\nu}{4}\|\by_k(t)\|_\bHo^2 + C\|\by_k(t)\|^2_\bLd\Big(\|\be_{2H}(t)\|^2_{\bHo} + \|\be_{2W}(t)\|^{p'}_{\bWp}\Big).
\]
Now, inserting this inequality in \eqref{E4.2} we deduce
\begin{align*}
&\frac{1}{2}\frac{d}{dt}\|\by_k(t)\|_{\bLd}^2 + a(\by_k(t),\by_k(t))  \le \frac{\nu}{2}\|\by_k\|^2_\bHo + \frac{1}{\nu}\|\bg(t)\|^2_{\bV'}\\
& + C\|\by_k(t)\|^2_\bLd\Big(\|\be_{2H}(t)\|^2_{\bHo} + \|\be_{2W}(t)\|^{p'}_{\bWp}\Big),
\end{align*}
consequently
\begin{align}
&\frac{d}{dt}\|\by_k(t)\|_{\bLd}^2 + \nu\|\by_k(t)\|^2_\bHo \notag\\
&\le \frac{2}{\nu}\|\bg(t)\|^2_{\bV'} + 2C\|\by_k(t)\|^2_\bLd\Big(\|\be_{2H}(t)\|^2_{\bHo} + \|\be_{2W}(t)\|^{p'}_{\bWp}\Big). \label{E4.5}
\end{align}
Gronwall's inequality implies $\forall t \in I$
\begin{align*}
&\|\by_k(t)\|_{\bLd}\\
& \le \Big(\|\by_{0k}\|_\bLd + \sqrt{\frac{2}{\nu}}\|\bg\|_{L^2(I;\bV')}\Big)\exp\Big\{C\big(\|\be_{2H}\|^2_{L^2(I;\bHo)} + \|\be_{2W}\|_{L^{p'}(I;\bWp)}^{p'}\big)\Big\}\\
&\le \hat\eta_N\Big(\|\be_{2H}\|_{L^2(I;\bHo)} + \|\be_{2W}\|_{L^{p'}(I;\bWp)}\Big)\Big(\|\by_{0k}\|_\bLd + \|\bg\|_{L^2(I;\bV')}\Big)\\
\end{align*}
with
\[
\hat{\eta}_N(\rho) = \max\Big\{1,\sqrt{\frac{2}{\nu}}\Big\}\exp\{C(1 + \rho^{p'})\}.
\]
Moreover, by taking the infimum among all elements $\be_{2H} \in L^2(I;\bV) \cap L^\infty(I;\bLd)$ and $\be_{2W} \in L^q(I;\bWp)$ satisfying $\be_2 = \be_{2H} + \be_{2W}$, and noting that $\|\by_{0k}\|_\bLd = \|P_k\by_{N0}\|_\bLd \le \|\by_{N0}\|_\bLd$ we conclude
\begin{equation}
\|\by_k\|_{L^\infty(I;\bLd)} \le \hat\eta_N\big(\|\be_2\|_Y\big)\Big(\|\by_{N0}\|_\bLd + \|\bg\|_{L^2(I;\bV')}\Big).
\label{E4.6}
\end{equation}
This estimate implies that $I_k = [0,T]$ for all $k$.

{\em II - Estimates in $L^2(I;\bV)$.}  Integrating inequality \eqref{E4.5} in $[0,T]$ and using \eqref{E4.6} we get
\begin{align*}
&\nu\int_0^T\|\by_k(t)\|^2_\bHo\, dt \le \|\by_{N0}\|^2_\bLd + \frac{2}{\nu}\|\bg\|^2_{L^2(I;\bV')}\\
& + 2 C \hat\eta\big(\|\be_2\|_\bY\big)^2\big[\|\be_{2H}\|^2_{L^2(I;\bHo)} + \|\be_{2W}\|^{p'}_{L^{p'}(I;\bWp)}\big]\Big(\|\by_{N0}\|^2_\bLd + \|\bg\|^2_{L^2(I;\bV')}\Big).
\end{align*}
Arguing as above, we infer from this inequality
\begin{align}
&\|\by_k\|_{L^2(I;\bHo)}\notag\\
& \le \frac{1}{\sqrt{\nu}}\max\Big\{1,\sqrt{\frac{2}{\nu}}\Big\}\Big[1 + \sqrt{2C}\hat\eta\big(\|\be_2\|_\bY\big)\|\be_2\|^{\frac{p'}{2}}_\bY\big]\big(\|\by_{N0}\|_\bLd + \|\bg\|_{L^2(I;\bV')}\big).
\label{E4.7}
\end{align}
Then, \eqref{E4.6} and \eqref{E4.7} imply that $\by_k$ satisfies the first inequality of \eqref{E2.10} for
\[
\eta_N(\rho) = \hat\eta(\rho) + \frac{1}{\sqrt{\nu}}\max\Big\{1,\sqrt{\frac{2}{\nu}}\Big\}\big[1 + \sqrt{2 C}\hat\eta(\rho)\rho^{\frac{p'}{2}}\big].
\]

{\em III - Estimates of $\by_k'$ in $L^2(I;\bV')$.} Let us observe that $\{\bpsi_j\}_{j = 1}^\infty$ is an orthonormal basis of $\bH$ and an orthogonal basis of $\bV$. Then, given $\bpsi \in \bV$ we have the identity
\[
\bpsi = \sum_{j = 1}^\infty\frac{(\bpsi,\bpsi_j)_\bHo}{(\bpsi_j,\bpsi_j)_\bHo}\bpsi_j.
\]
From here we infer
\[
(P_k\bpsi,\bphi)_\bHo = (\bpsi,\bphi)_\bHo \quad \forall \bphi \in \bV_k,
\]
which shows that $P_k\bpsi$ is also the orthogonal $\bHo$-projection of $\bpsi$ on $\bV_k$. As a consequence we get that $\|P_k\bpsi\|_\bHo \le \|\bpsi\|_\bHo$ for every $\bpsi \in \bV$. Moreover, since
\[
\by_k'(t) = \sum_{j = 1}^kg'_{k,j}(t)\bpsi_j,
\]
we get $(\by_k'(t),\bpsi)_\bLd = (\by_k'(t),P_k\bpsi)_\bLd$. Then, from the differential equation \eqref{E2.11}
we get for every $\bpsi \in V$
\begin{align*}
&(\by_k'(t),\bpsi)_\bLd = \langle\bg(t),P_k\bpsi\rangle_{\bV',\bV} - a(\by_k(t),P_k\bpsi) - \nu_0b(\by_k(t),\by_k(t),P_k\bpsi)\\
&- b(\be_1(t),\by_k(t),P_k\bpsi) - b(\by_k(t),\be_2(t),P_k\bpsi).
\end{align*}
Now we observe that the estimates I and II imply that $\{\by_k\}_{k = 1}^\infty$ is bounded in $L^2(I;\bV) \cap L^\infty(I;\bH)$, hence bounded in $\bY$. Then, using Lemma \ref{L2.1} we deduce form the above identity and the inequality $\|P_k\bpsi\|_\bHo \le \|\bpsi\|_\bHo$
\[
\|\by_k'\|_{L^2(I;\bV')} \le \|\bg\|_{L^2(I;\bV')} +\|\by_k\|_\bY\big( \nu + \nu_0\|\by_k\|_\bY + \|\be_1\|_\bY + \|\be_2\|_\bY\big).
\]
This inequality along with the estimates \eqref{E4.6} and \eqref{E4.7} proves the boundedness of $\{\by_k'\}_{k = 1}^\infty$ in $L^2(I;\bV')$. Hence, $\{\by_k\}_{k = 1}^\infty$ is bounded in $\bWoT$ and the second inequality of \eqref{E2.10} holds.

Finally, using the above estimates, it is standard to pass to the limit in \eqref{E4.1}, taking a subsequence if necessary, and to deduce that $\{\by_k\}_{k = 1}^\infty$ converges weakly in $\bWoT$ and weakly$^*$ in $L^\infty(I;\bH)$ to a solution $\by_N$ of the system \eqref{E2.11}; see, for instance, \cite[Chapter I-6.4.4]{Lions69}. Moreover, since every $\by_k$ satisfies \eqref{E2.10}, then $\by_N$ does it as well. Further, since $\by_N \in L^2(I;\bV)$ and $\by'_N \in L^2(I;\bV')$ we deduce that $\by_N \in \bWoT$. Moreover, applying De Rham theorem we infer the existence of $\p_N \in W^{-1,\infty}(I;L^2(\Omega)/\mathbb{R})$ such that $(\by_N,\p_N)$ is solution of \eqref{E2.9}; see \cite[Chaper V-1.5]{Boyer-Fabrie2013}. Now, using Lemma \ref{L2.1} and Gronwall's inequality the uniqueness of a solution follows in a standard way; cf. \cite[Chaper V-1.3.5]{Boyer-Fabrie2013}.
\end{proof}

\section{Sensitivity analysis of the state equation}
\label{S5}
\setcounter{equation}{0}

In this section we analyze the differentiability of the mapping $G:L^q(I;\bWmop) \longrightarrow \mY$ associating to each element $\bef \in L^q(I;\bWmop)$ the solution $\by_\bef \in \mY$ of \eqref{E2.3}.

\begin{thrm}
The mapping $G$ is of class $C^\infty$. Further, given $\bef, \bg, \bg_1, \bg_2 \in L^q(I;\bWmop)$ we have that $\bz_\bg = G'(\bef)\bg$ and $\bz_{\bg1,\bg2} = G''(\bef)(\bg_1,\bg_2)$ are the unique solutions of the systems
\begin{equation}
\left\{\begin{array}{l}\displaystyle\frac{\partial \bz}{\partial t} -\nu\Delta\bz + (\by_\bef \cdot \bna)\bz + (\bz \cdot \bna)\by_\bef + \nabla\q = \bg\ \text{ in } Q,\\[1.2ex]\div\bz = 0 \ \text{ in } Q, \ \bz = 0 \ \text{ on } \Sigma,\ \bz(0) = 0 \text{ in } \Omega,\end{array}\right.
\label{E5.1}
\end{equation}
and
\begin{equation}
\left\{\begin{array}{l}\displaystyle\frac{\partial \bz}{\partial t} -\nu\Delta\bz + (\by_\bef \cdot \bna)\bz + (\bz \cdot \bna)\by_\bef + \nabla\q = - (\bz_{\bg_2} \cdot \bna)\bz_{\bg_1} - (\bz_{\bg_1} \cdot \bna)\bz_{\bg_2}\ \text{ in } Q,\\[1.2ex]\div\bz = 0 \ \text{ in } Q, \ \bz = 0 \ \text{ on } \Sigma,\ \bz(0) = 0 \text{ in } \Omega,\end{array}\right.
\label{E5.2}
\end{equation}
respectively, where $\by_\bef = G(\bef)$ and $\bz_{\bg_i} = G'(\bef)\bg_i$ for $ i = 1, 2$.
\label{T3.1}
\end{thrm}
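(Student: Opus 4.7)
The plan is to apply the implicit function theorem to the operator
\begin{equation*}
\mF: \mY \times L^q(I;\bWmop) \longrightarrow \mathcal{Z}, \qquad \mF(\by,\bef) = \Big(\frac{\partial \by}{\partial t} - \nu \Delta \by + B(\by,\by) - \bef,\; \by(0) - \by_0\Big),
\end{equation*}
with $\mathcal{Z}$ an appropriate Banach space (carrying $L^2(I;\bHmo) + L^q(I;\bWpc')$ in its first slot and $\bY_0$ in its second). By Lemma \ref{L2.1}, $\by \mapsto B(\by,\by)$ is a continuous quadratic form on $\bY \supset \mY$, so $\mF$ is a polynomial of total degree two in $(\by,\bef)$ and therefore real-analytic. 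In particular $\mF$ is of class $C^\infty$, with partial derivatives
\begin{align*}
\mF'_{\by}(\by,\bef)\bz &= \big(\partial_t \bz -\nu\Delta\bz + B(\by,\bz) + B(\bz,\by),\; \bz(0)\big),\\
\mF''_{\by\by}(\by,\bef)(\bz_1,\bz_2) &= \big(B(\bz_1,\bz_2)+B(\bz_2,\bz_1),\, 0\big),
\end{align*}
all higher $\by$-derivatives vanishing, and $\partial_\bef \mF(\by,\bef)$ equal to minus the identity in the first coordinate and zero in the second. Theorem \ref{T2.1} asserts $\mF(G(\bef),\bef) = 0$ for every $\bef$.

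The crucial step is to verify that $\mF'_{\by}(\by_\bef,\bef): \mY \to \mathcal{Z}$ is a Banach space isomorphism. Concretely, this reduces to showing that for every $(\bg,\bz_0) \in L^q(I;\bWmop) \times \bY_0$ the linearized Oseen system
\begin{equation*}
\left\{\begin{array}{l}
\partial_t \bz -\nu\Delta\bz + (\by_\bef \cdot \nabla)\bz + (\bz \cdot \nabla)\by_\bef + \nabla \q = \bg \text{ in } Q,\\
\div\bz = 0 \text{ in } Q,\; \bz = 0 \text{ on } \Sigma,\; \bz(0) = \bz_0,
\end{array}\right.
\end{equation*}
has a unique solution $\bz \in \mY$ depending continuously on the data. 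I would obtain this by a splitting argument modelled on the proof of Theorem \ref{T2.1}: decompose $\bz_0 = \bz_{N0} + \bz_{S0} \in \bH + \bBqp$, take $\bz_S \in \bWqpoT$ to be the Stokes solution delivered by Theorem \ref{T2.2} (with $r=q$, $s=p$) for data $(\bg,\bz_{S0})$, and then solve for $\bz_N \in \bWoT$ the residual system, which has precisely the form \eqref{E2.9} with $\nu_0 = 0$, $\be_1 = \be_2 = \by_\bef \in \bY$, and right-hand side $-B(\by_\bef,\bz_S) - B(\bz_S,\by_\bef) \in L^2(I;\bHmo)$ by Lemma \ref{L2.1}. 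Proposition \ref{T2.3} then furnishes $\bz_N$ together with the required bound. Uniqueness in $\mY$ follows by repeating the last step of the proof of Theorem \ref{T2.1}: any $\mY$-solution with zero data must in fact lie in $\bWoT$ by Theorem \ref{T2.2} applied to a Stokes system with right-hand side in $L^2(I;\bHmo)$, after which the standard energy estimate combined with Gronwall's inequality forces $\bz = 0$.

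With the isomorphism in hand, the implicit function theorem yields $G \in C^\infty$ in a neighborhood of every $\bef$, hence globally. Differentiating $\mF(G(\bef),\bef) = 0$ once in the direction $\bg$ gives $\mF'_{\by}(\by_\bef,\bef) G'(\bef)\bg = (\bg,0)$, which is exactly the variational form of \eqref{E5.1} for $\bz_\bg = G'(\bef)\bg$. Differentiating once more and using the bilinearity of $B$ in the formula for $\mF''_{\by\by}$ produces
\begin{equation*}
\mF'_{\by}(\by_\bef,\bef)\,G''(\bef)(\bg_1,\bg_2) = -\big(B(\bz_{\bg_1},\bz_{\bg_2}) + B(\bz_{\bg_2},\bz_{\bg_1}),\,0\big),
\end{equation*}
which is \eqref{E5.2}. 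The main obstacle is the well-posedness of the linearized Oseen system in the low-regularity framework $L^q(I;\bWmop)$ with $p<2$; once it is handled via the decomposition and the results of Sections \ref{S3}--\ref{S4}, the rest of the argument is formal.
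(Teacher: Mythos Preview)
Your strategy is exactly the paper's: set up an implicit-function-theorem map $\mG(\by,\bef)$, verify that its $\by$-linearization is an isomorphism by the same Stokes/residual splitting (Theorem~\ref{T2.2} for $\bz_S$, Proposition~\ref{T2.3} with $\nu_0=0$ for $\bz_N$, uniqueness via the Gronwall argument from Theorem~\ref{T2.1}), and then differentiate the identity $\mG(G(\bef),\bef)=0$ to obtain \eqref{E5.1}--\eqref{E5.2}.

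Two technical points need tightening before the argument is complete. First, the first slot of $\mathcal{Z}$ must be $L^2(I;\bV') + L^q(I;\bWpc')$, i.e.\ duals of the \emph{divergence-free} spaces, rather than $L^2(I;\bHmo)+L^q(I;\bWpc')$: acting on divergence-free $\bz$, the operator $\bz\mapsto\partial_t\bz-\nu\Delta\bz+B(\by_\bef,\bz)+B(\bz,\by_\bef)$ cannot hit a generic element of $\bHmo$ (the missing piece is precisely the pressure gradient), so with your target space $\mF'_\by$ is not surjective and the implicit function theorem does not apply. Correspondingly, the $\bef$-variable should live in $L^q(I;\bWpc')$; the paper recovers the claim for $L^q(I;\bWmop)$ at the very end by composing with the restriction $R_\sigma:L^q(I;\bWmop)\to L^q(I;\bWpc')$ (cf.\ Remark~\ref{R2.1}). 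Second, surjectivity has to be checked for arbitrary $(\bg,\bz_0)$ in the full target space, so you must split $\bg=\bg_N+\bg_S$ with $\bg_N\in L^2(I;\bV')$ and $\bg_S\in L^q(I;\bWpc')$, feeding $\bg_S$ to Theorem~\ref{T2.2} and $\bg_N$ (plus the cross terms $-B(\by_\bef,\bz_S)-B(\bz_S,\by_\bef)$) to Proposition~\ref{T2.3}; your sketch only handles $\bg\in L^q(I;\bWmop)$.
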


\begin{proof}
Let us define the space $\mF = L^2(I;\bV') + L^q(I;\bWpc')$ endowed with the norm
\[
\|\bef\|_\mF = \inf\{\|\bef_1\|_{L^2(I;\bV')} + \|\bef_2\|_{L^q(I;\bWpc')} : \bef = \bef_1 + \bef_2\}.
\]
Thus, $\mF$ is a Banach space. We also consider the operators
\begin{align*}
&A_\bV:\bV \longrightarrow \bV',\ \ \langle A_\bV\by,\bz\rangle_{\bV',\bV} = \nu\int_\Omega \nabla y : \nabla z\, dx,\ \ \forall \bz \in\bV,\\
&A_\bW:\bWp \longrightarrow \bWpc',\ \ \langle A_\bV\by,\bz\rangle_{\bWpc',\bWp} = \nu\int_\Omega \nabla y : \nabla z\, dx,\ \ \forall \bz \in \bWpc.
\end{align*}
Associated with these two continuous operators we define
\[
A:\mY \longrightarrow \mF,\ \ A\by = A_\bV\by_1 + A_\bW\by_2,
\]
where $\by = \by_1 + \by_2$ with $\by_1 \in \bWoT$ and $\by_2 \in \bWqpoT$. It is immediate to check that $A\by$ is independent of the chosen representation $\by = \by_1 + \by_2$, and it is continuous. Now, we introduce the mapping
\begin{align*}
&\mG:\mY \times L^q(I;\bWpc') \longrightarrow \mF \times \bY_0,\\
&\mG(\by,\bef) = \Big(\frac{\partial\by}{\partial t} + A\by + B(\by,\by) - \bef,\by(0) - \by_0\Big),
\label{E5.3}
\end{align*}
where $\by_0$ is the initial condition in \eqref{E2.1}. Recall that $\mY \subset C([0;T];\bY_0)$ holds. Hence, $\by \in \mY \to \by(0) \in \bY_0$ is a linear and continuous mapping. Moreover, Lemma \ref{L2.1} implies that $\by \in \mY \to B(\by,\by) \in L^2(I;\bHmo) \subset L^2(I;\bV') \subset \mF$ is bilinear and continuous. By definition of $\bWoT$ and $\bWqpoT$ we also have that $\frac{\partial}{\partial t}:\mY \to \mF$ is a linear and continuous operator. All together this implies that $\mG$ is a $C^\infty$ mapping.

Given $\bef \in L^q(I;\bWpc')$, we denote by $\by_\bef \in \mY$ the solution of \eqref{E2.3}. Then, we have that
\begin{align}
&\frac{\partial\mG}{\partial\by}(\by_f,\bef):\mY\longrightarrow\mF \times \bY_0,\notag\\
&\frac{\partial\mG}{\partial\by}(\by_f,\bef)\bz = \Big(\frac{\partial\bz}{\partial t} + A\bz + B(\by_f,\bz) + B(\bz,\by_f),\bz(0)\Big)\quad \forall \bz \in \mY
\end{align}
is a linear and continuous mapping. Actually, it is an isomorphism. Let us prove this. Given an arbitrary element $(\bg,\bz_0) \in \mF \times \bY_0$, we set $\bg = \bg_N + \bg_S$ and $\bz_0 = \bz_{N0} + \bz_{S0}$ with $\bg_N \in L^2(I;\bV')$, $\bg_S \in L^q(I;\bWpc')$, $\bz_{N0} \in \bH$, and $\bz_{S0} \in \bBqp$. Now, we show the existence and uniqueness of a solution $\bz \in \mY$ of the equation

\begin{equation}
\left\{\begin{array}{l}\displaystyle\frac{\partial \bz}{\partial t} + A\bz + B(\by_f,\bz) + B(\bz,\by_f) = \bg \text{ in } (0,T), \\[1.2ex]
\bz(0) = \bz_0.
\end{array}\right.
\label{E5.4}
\end{equation}
We decompose the system in two parts
\begin{equation}
\left\{\begin{array}{l}\displaystyle\frac{\partial \bz_S}{\partial t} +A_\bW\bz_S = \bg_S\text{ in } (0,T), \\[1.2ex]\bz_S(0) = \bz_{S0},\end{array}\right.
\label{E5.5}
\end{equation}
and
\begin{equation}
\hspace{-0.45cm}\left\{\begin{array}{l}\displaystyle\frac{\partial \bz_N}{\partial t} +A_\bV\bz_N + B(\by_\bef,\bz_N) + B(\bz_N,\by_\bef) = \bg_N - B(\by_\bef,\bz_S) - B(\bz_S,\by_\bef)\text{ in } (0,T), \\[1.2ex] \bz_N(0) = \bz_{N0} \text{ in } \Omega.\end{array}\right.
\label{E5.6}
\end{equation}
The existence and uniqueness of a solution $\bz_S \in \bWqpoT$ of \eqref{E5.5} follows from Theorem \ref{T2.2}. In equation \eqref{E5.6}, we have that $\bz_{N0} \in \bH$, $\by_\bef \in \bY$, and from Lemma \ref{L2.1} we get that the right hand side of the partial differential equation belongs to $L^2(I;\bHmo)$. Hence, from Proposition \ref{T2.3} we infer the existence and uniqueness of a solution $\by_N \in \bWoT$ of \eqref{E5.6}. Now, setting $\by = \by_N + \by_S \in \mY$, we deduce that $\by$ is a solution of \eqref{E5.4}. The uniqueness follows from Gronwall's inequality, arguing as in the proof of Theorem \ref{T2.1}.

Now, it is enough to apply the implicit function theorem to deduce the existence of a function $\tilde G:\bWpc' \longrightarrow \mY$ of class $C^\infty$ such that $\mG(\tilde G(\bef),\bef) = 0$ for every $\bef \in L^q(I;\bWpc')$. Hence, $\tilde G(\bef) = \by_\bef$ is the solution of \eqref{E2.3}. Moreover, by differentiation with respect to $\bef$ of the identity $\mG(\tilde G(\bef),\bef) = 0$, setting $\bz_g = DG(\bef)\bg$ for $\bg \in  L^q(I;\bWmop)$, and using \eqref{E5.3} and Rham's theorem equation \eqref{E5.1} follows. The equation \eqref{E5.2} follows easily from the identity
\[
\frac{\partial^2\mG}{\partial\by^2}(\by_\bef)(\bg_1,\bg_2)\bz = \Big(\frac{\partial\bz}{\partial t} + A\bz + B(\by_f,\bz) + B(\bz,\by_\bef) + B(\bz_{\bg_1},\bz_{\bg_2}) + B(\bz_{\bg_2},\bz_{\bg_1}),\bz(0)\Big).
\]
Finally, the theorem follows by observing that $G:L^q(I;\bWmop) \longrightarrow \mY$ is given by $G = \tilde G\circ R_\sigma$, where $R_\sigma:L^q(I;\bWmop) \longrightarrow \bef \in L^q(I;\bWpc')$ is the restriction operator, that is linear and continuous.
\end{proof}

\section{Asymptotic Stability of Steady Solutions}
\label{S6}
\setcounter{equation}{0}

In this section we extend an asymptotic stability result to the case of a source term $\bef \in \bWp$, with $p\in [\frac{4}{3},2)$, independent of time, compare e.g. \cite[Section 3.4]{Boyer-Fabrie2013}. Associated with this source we consider an element $(\by_\infty,\p_\infty) \in \bWop \times L^p(\Omega)/\mathbb{R}$ satisfying the following steady-state Navier-Stokes equations
\begin{equation}
\left\{\begin{array}{l}\displaystyle -\nu\Delta\by_\infty + (\by_\infty \cdot \bna)\by_\infty + \nabla\p_\infty = \bef \text{ in } \Omega,\\[1.2ex]\div\by_\infty = 0 \ \text{ in } \Omega, \ \by_\infty = 0 \ \text{ on } \Gamma.\end{array}\right.
\label{E6.1}
\end{equation}
The reader is referred to \cite{Serre1983} for the existence of a solution of \eqref{E6.1} in the mentioned space. In this section, we will prove that given $\by_0 \in \bH$ and assuming that $\bef$ is small enough, the solution of
\begin{equation}
\left\{\begin{array}{l}\displaystyle\frac{\partial \by}{\partial t} -\nu\Delta\by + (\by \cdot \bna)\by + \nabla\p = \bef\ \text{ in } \Omega \times (0,\infty),\\[1.2ex]\div\by = 0 \ \text{ in } \Omega \times (0,\infty), \ \by = 0 \ \text{ on } \Gamma \times (0,\infty),\ \by(0) = \by_0 \text{ in } \Omega.\end{array}\right.
\label{E6.2}
\end{equation}
satisfies that $\by(t) \to \by_\infty$ as $t \to \infty$. First, let us prove that the solution of \eqref{E6.1} is unique if $\bef$ is small enough.

\begin{lmm}
There exists a constant $C_\infty$ depending only on $\nu$, $p$ and $\Omega$ such that if $\|\bef\|_{\bWmop} < C_\infty$, then  \eqref{E6.1} has a unique solution $(\by_\infty,\p_\infty) \in \bWp \times L^p(\Omega)/\mathbb{R}$.
\label{L6.1}
\end{lmm}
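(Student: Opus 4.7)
My plan is to subtract the steady-state equations satisfied by two solutions and to treat the difference as a stationary Stokes system with a bilinear source. Let $(\by_1,\p_1),(\by_2,\p_2)\in \bWp\times\bLmp$ be two solutions of \eqref{E6.1}, set $\by:=\by_2-\by_1$ and $\p:=\p_2-\p_1$, and use the standard identity
\[(\by_2\cdot\nabla)\by_2-(\by_1\cdot\nabla)\by_1=(\by\cdot\nabla)\by_2+(\by_1\cdot\nabla)\by\]
to rewrite the difference equation as
\[-\nu\Delta\by+\nabla\p=-(\by\cdot\nabla)\by_2-(\by_1\cdot\nabla)\by,\qquad \div\by=0\text{ in }\Omega,\qquad \by=0\text{ on }\Gamma.\]

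The first step is to invoke the stationary Stokes estimate $\|\bz\|_{\bWp}\le C_S\|\mathbf{h}\|_{\bWmop}$ for any solution of $-\nu\Delta\bz+\nabla\q=\mathbf{h}$ with $\div\bz=0$ and $\bz|_\Gamma=0$; this is the time-independent specialization of Theorem~\ref{T2.2} and is also classical. The second step is the bilinear estimate $\|(\bu\cdot\nabla)\bv\|_{\bWmop}\le C_B\|\bu\|_{\bWp}\|\bv\|_{\bWp}$ for $\bu,\bv\in\bWp$: for every $\bpsi\in\bWopc$, integration by parts using $\div\bu=0$ yields
\[\left|\int_\Omega (\bu\cdot\nabla)\bv\cdot\bpsi\,dx\right|=\left|\int_\Omega \bu\otimes\bv:\nabla\bpsi\,dx\right|\le \|\bu\|_{\mathbf{L}^{2p}(\Omega)}\|\bv\|_{\mathbf{L}^{2p}(\Omega)}\|\nabla\bpsi\|_{\mathbf{L}^{p'}(\Omega)},\]
and in dimension two with $p\ge 4/3\ge 1$ the Sobolev embedding $\bWop\hookrightarrow\mathbf{L}^{2p}(\Omega)$ (equivalent to $2p\le 2p/(2-p)$) gives the claim. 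Combining the two estimates produces
\[\|\by\|_{\bWp}\le C_SC_B\bigl(\|\by_1\|_{\bWp}+\|\by_2\|_{\bWp}\bigr)\|\by\|_{\bWp}.\]

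The third step is to control each $\|\by_i\|_{\bWp}$ a priori. Applying the Stokes estimate to \eqref{E6.1} itself together with the bilinear estimate for $(\by_i\cdot\nabla)\by_i$ gives the quadratic inequality
\[\|\by_i\|_{\bWp}\le C_S\|\bef\|_{\bWmop}+C_SC_B\|\by_i\|_{\bWp}^2,\]
whose two roots are well separated for $\|\bef\|_{\bWmop}$ small. The map $T$ assigning to $\bz\in\bWp$ the Stokes solution with source $\bef-(\bz\cdot\nabla)\bz$ is a strict contraction of a closed ball in $\bWp$ of radius $R$ comparable to $\|\bef\|_{\bWmop}$; combined with a continuation argument along the family of data $t\bef$, $t\in[0,1]$, this shows that every solution of \eqref{E6.1} in $\bWp\times\bLmp$ belongs to the small branch, so $\|\by_i\|_{\bWp}\le 2C_S\|\bef\|_{\bWmop}$. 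Inserting this bound and choosing $C_\infty$ so small that $4C_S^2C_BC_\infty<1$ forces $\by\equiv 0$, whereupon the Stokes equation gives $\nabla\p=0$, i.e., $\p\equiv 0$ in $\bLmp$.

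The main obstacle is excluding the large-norm branch of the quadratic inequality: a naive contraction argument delivers uniqueness only inside a small ball, so the continuation step—or, alternatively, a careful invocation of how solutions are constructed in \cite{Serre1983}—is the delicate part of the proof.
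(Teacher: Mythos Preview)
Your reduction to a Stokes problem for the difference and your bilinear estimate are fine, but the third step has a genuine gap that you yourself flag: the quadratic inequality
\[
\|\by_i\|_{\bWp}\le C_S\|\bef\|_{\bWmop}+C_SC_B\|\by_i\|_{\bWp}^{2}
\]
does \emph{not} force $\|\by_i\|_{\bWp}$ to be small; it only says $\|\by_i\|_{\bWp}$ lies below the small root or above the large root. The continuation argument along $t\bef$ does not close this: to continue a \emph{given} solution $\by_i$ back to $t=0$ you would need invertibility of the linearized operator at $\by_i$, which is essentially the uniqueness you are trying to prove, and nothing prevents a disconnected large-norm branch. Invoking ``how solutions are constructed in \cite{Serre1983}'' is not admissible either, since the lemma asserts uniqueness among \emph{all} solutions in $\bWp\times\bLmp$, not just those produced by a particular scheme.

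The paper avoids this obstruction by two devices you do not use. First, it observes that the right-hand side of the difference equation actually lies in $\bHmo$ (since $\by_1,\by_2\in\bWp\subset\bLf$), so by Stokes regularity the difference $\by=\by_2-\by_1$ belongs to $\bV$; one can then test the equation with $\by$ and obtain the energy identity
\[
\nu\|\by\|_{\bHo}^{2}=b(\by,\by,\by_2)\le C_1\|\by_2\|_{\bLf}\|\by\|_{\bHo}^{2}.
\]
Second, and this is the decisive point, the paper controls $\|\by_2\|_{\bLf}$ \emph{unconditionally} via the estimate $\|\by_2\|_{\bWop}\le M_p\|\bef\|_{\bWpc'}\bigl(1+\|\by_2\|_{\bLf}\bigr)$ from \cite[Theorem~2.3]{Casas-Kunisch2019}, which is \emph{linear} in $\|\by_2\|_{\bLf}$ and therefore yields, for $\|\bef\|$ small, the bound $\|\by_2\|_{\bLf}\le C_2M_p\|\bef\|/(1-C_2M_p\|\bef\|)$ with no branching. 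This is exactly what your quadratic inequality cannot deliver. If you want to repair your argument, replace your Step~3 by this linear a~priori estimate (or an equivalent one), and then either your contraction inequality or the paper's energy identity finishes the proof.
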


\begin{proof}
Let $(\by_1,\p_1)$ and $(\by_2,\p_2)$ be two solutions of \eqref{E6.1} belonging to $\bWp \times L^p(\Omega)/\mathbb{R}$. We set $(\by,\p) = (\by_2 - \by_1,\p_2-\p_1)$ and
\[
\bg = -\Big[(\by\cdot\nabla)\by + (\by_2\cdot\nabla)\by + (\by\cdot\nabla)\by_2\Big].
\]
Then, $(\by,\p) \in \bWp \times L^p(I;\Omega)/\mathbb{R}$ is the unique solution of the Stokes system
\begin{equation}
\left\{\begin{array}{l}\displaystyle -\nu\Delta\by + \nabla\p = \bg \text{ in } \Omega,\\[1.2ex]\div\by = 0 \ \text{ in } \Omega, \ \by = 0 \ \text{ on } \Gamma.\end{array}\right.
\label{E6.3}
\end{equation}
It is easy to check that $\bg \in \bHmo \subset \bWmop$. Using this fact, Cattabriga's result \cite{Cattabriga61}, see also \cite[Theorem IV.6.1]{Galdi2011}, and the embedding $\bV \subset \bWp$, we infer that $\by \in \bV$. Therefore, we can multiply the equation \eqref{E6.3} by $\by$ to get
\begin{equation}
\nu\|\by\|^2_\bHo - \langle\bg,\by\rangle_{\bHmo,\bHo} = 0.
\label{E6.4}
\end{equation}
Using that $b(\by,\by,\by) = b(\by_2,\by,\by) = 0$ and $b(\by,\by_2,\by) = - b(\by,\by,\by_2)$ it yields
\begin{align}
&|\langle\bg,\by\rangle_{\bHmo,\bHo}| = |b(\by,\by,\by_2)|\notag\\
& \le \|\by\|_\bLf\|\by_2\|_\bLf\|\by\|_\bHo \le C_1\|\by_2\|_\bLf\|\by\|^2_\bHo,
\label{E6.5}
\end{align}
where $C_1$ only depends on $\Omega$. Now, from \cite[Theorem 2.3]{Casas-Kunisch2019} we infer the existence of a constant $M_p$ depending only of $p$, $\nu$ and $\Omega$ such that
\[
\|\by_2\|_\bWop \le M_p\|\bef\|_{\bWpc'}\Big(1 + \|\by_2\|_\bLf\Big).
\]
Hence, with the inequality $\|\by_2\|_\bLf \le C_2\|\by_2\|_\bWop$, recalling that $p \ge \frac{4}{3}$, and assuming that
\[
\|\bef\|_{\bWpc'} < \frac{1}{C_2M_p},
\]
we obtain
\begin{equation}
\|\by_2\|_\bLf \le \frac{C_2M_p\|\bef\|_{\bWpc'}}{1 - C_2M_p\|\bef\|_{\bWpc'}}.
\label{E6.6}
\end{equation}
From \eqref{E6.4} and \eqref{E6.5} we get that $\by = 0$ if $C_1\|\by_2\|_\bLf < \nu$. With \eqref{E6.6} this inequality holds if
\begin{equation}
\|\bef\|_{\bWpc'} < C_\infty = \frac{\nu}{(C_1 + \nu)C_2M_p}.
\label{E6.7}
\end{equation}
Finally, the uniqueness of $\p_\infty \in L^p(\Omega)/\mathbb{R}$ follows from the uniqueness of $\by_\infty$ in $\bWp$.
\end{proof}

Now, we prove the main theorem of this section.
\begin{thrm}
Let us assume that $(\bef,\by_0) \in \bWmop \times \bH$ with $\bef$ satisfying \eqref{E6.7}. Let $\by \in \mY$ be the solution of \eqref{E6.2}. Then, there exists a constant $\alpha > 0$ depending only on $\Omega$, $p$ and $\nu$ such that the following estimate holds
\begin{equation}
\|\by(t) - \by_\infty\|_\bLd \le \|\by_0 - \by_\infty\|_\bLd\text{\rm e}^{-\alpha t},\quad \forall t \ge 0.\label{E6.8}
\end{equation}
\label{T6.1}
\end{thrm}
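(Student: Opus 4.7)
The plan is to study the perturbation $\bw = \by - \by_\infty$. Subtracting the stationary equation \eqref{E6.1} from \eqref{E6.2} and writing
\[
(\by\cdot\nabla)\by - (\by_\infty\cdot\nabla)\by_\infty = (\bw\cdot\nabla)\bw + (\by_\infty\cdot\nabla)\bw + (\bw\cdot\nabla)\by_\infty,
\]
I obtain that $\bw$ solves
\[
\partial_t \bw - \nu\Delta \bw + (\bw\cdot\nabla)\bw + (\by_\infty\cdot\nabla)\bw + (\bw\cdot\nabla)\by_\infty + \nabla \q = 0 \text{ in } Q,
\]
with $\div \bw = 0$, $\bw|_\Sigma = 0$, and $\bw(0) = \by_0 - \by_\infty$. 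Since $\bWop \hookrightarrow \bLd$ in two dimensions and $\by_\infty$ is divergence-free with vanishing trace, one has $\by_\infty \in \bH$, and therefore $\bw(0) \in \bH$.

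The next step is to upgrade the regularity of $\bw$ so that it may be tested against itself in the $\bLd$ duality. Viewed as time-independent, $\by_\infty \in \bWp$ lies in $L^q(I;\bWp) \subset \bY$ and in $\bWqpoT \subset \mY$. Proposition~\ref{T2.3} applied with $\nu_0 = 1$, $\be_1 = \be_2 = \by_\infty$, $\bg = 0$, and $\by_{N0} = \by_0 - \by_\infty$ yields a unique $\tilde\bw \in \bWoT$ solving the perturbation equation in variational form. Since $\tilde\bw + \by_\infty \in \mY$ then satisfies the variational form \eqref{E2.3} of \eqref{E6.2} (the stationary part precisely accounts for $\bef$), the uniqueness in Theorem~\ref{T2.1} forces $\tilde\bw + \by_\infty = \by$, so that $\bw = \tilde\bw \in \bWoT \subset C([0,T];\bH)$ for every finite $T > 0$.

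Testing the perturbation equation against $\bw$ is now legitimate. The cancellations $b(\bw,\bw,\bw) = b(\by_\infty,\bw,\bw) = 0$ reduce the resulting identity to
\[
\tfrac{1}{2}\tfrac{d}{dt}\|\bw\|_\bLd^2 + \nu\|\bw\|_\bHo^2 + b(\bw,\by_\infty,\bw) = 0.
\]
The same estimate used in \eqref{E6.5}, namely $|b(\bw,\by_\infty,\bw)| = |b(\bw,\bw,\by_\infty)| \le C_1\|\by_\infty\|_\bLf \|\bw\|_\bHo^2$ via the two-dimensional embedding $\bHo \hookrightarrow \bLf$, together with the strict bound $C_1\|\by_\infty\|_\bLf < \nu$ obtained from \eqref{E6.6}--\eqref{E6.7} exactly as in Lemma~\ref{L6.1}, gives
\[
\tfrac{d}{dt}\|\bw\|_\bLd^2 + 2\alpha\|\bw\|_\bLd^2 \le 0,
\]
with $\alpha = \lambda_1(\nu - C_1\|\by_\infty\|_\bLf) > 0$, where $\lambda_1$ is the Poincaré constant in $\bHo$. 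Gronwall's inequality then delivers \eqref{E6.8}.

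The main technical hurdle is the regularity upgrade in the second paragraph: a generic element of $\mY$ is not spatially regular enough to be tested against itself in $\bLd$, because of the $L^q(I;\bWp)$ component with $p<2$. Absorbing that rough part into the time-independent $\by_\infty$ and invoking Proposition~\ref{T2.3} for the remainder $\bw$ is what makes the classical $\bHo$ energy method applicable. A minor caveat: as constructed $\alpha$ depends on $\bef$ through $\|\by_\infty\|_\bLf$; however \eqref{E6.6}--\eqref{E6.7} provide a uniform upper bound on this quantity whenever $\|\bef\|_{\bWpc'}$ is restricted to a closed subset of $\{\bef : \|\bef\|_{\bWpc'} < C_\infty\}$, which is how the dependence of $\alpha$ stated in Theorem~\ref{T6.1} should be read.
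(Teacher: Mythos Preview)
Your proof is correct and follows essentially the same route as the paper's: derive the perturbation equation for $\bw=\by-\by_\infty$, invoke Proposition~\ref{T2.3} with $\nu_0=1$, $\be_1=\be_2=\by_\infty$, $\bg=0$, $\by_{N0}=\by_0-\by_\infty$ to place $\bw$ in $\bWoT$, then run the energy argument using the trilinear bound from \eqref{E6.5}--\eqref{E6.6} and Gronwall. Your caveat about $\alpha$ depending on $\bef$ through $\|\by_\infty\|_\bLf$ is apt; the paper's explicit formula for $\alpha$ carries the same dependence on $\|\bef\|_{\bWpc'}$.
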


\begin{proof}
Let us set $(\bz,\q) = (\by - \by_\infty,\p - \p_\infty)$. Hence, we have
\begin{equation}
\left\{\begin{array}{l}\displaystyle\frac{\partial \bz}{\partial t} -\nu\Delta\bz + (\bz \cdot \bna)\bz + (\by_\infty\cdot\nabla)\bz + (\bz\cdot\nabla)\by_\infty + \nabla\q = 0\ \text{ in } Q,\\[1.2ex]\div\bz = 0 \ \text{ in } Q, \ \bz = 0 \ \text{ on } \Sigma,\ \bz(0) = \by_0 - \by_\infty\text{ in } \Omega.\end{array}\right.
\label{E6.9}
\end{equation}
First, we observe that $\by_\infty \in \bWp \subset \bH$, hence $\by_0 - \by_\infty \in \bH$. Taking $\nu_0 = 1$, $\be_1 = \be_2 = \by_\infty$, $\bg = 0$ and $\by_{N0} = \by_0 - \by_\infty$ in Proposition \ref{T2.3}, we deduce with Theorem \ref{T2.2} that $(\bz,\q) \in \bWoT \times W^{-1,\infty}(I;L^2(\Omega)/\mathbb{R})$ and that it is the unique solution of \eqref{E6.9}. Then, multiplying \eqref{E6.9} by $\bz$ and using that $b(\bz,\bz,\bz) = b(\by_\infty,\bz,\bz) = 0$ and $b(\bz,\by_\infty,\bz) = - b(\bz,\bz,\by_\infty)$ we infer
\begin{equation}
\frac{1}{2}\frac{d}{dt}\|\bz(t)\|^2_\bLd + \nu\|\bz(t)\|^2_\bHo - b(\bz(t),\bz(t),\by_\infty(t)) = 0, \ \text{ for a.a. t} \ge 0.
\label{E6.10}
\end{equation}
As in \eqref{E6.5}, we have
\[
|b(\bz(t),\bz(t),\by_\infty(t))| \le C_1\|\by_\infty\|_\bLf\|\bz\|^2_\bHo.
\]
Therefore, with \eqref{E6.6} and the inequality $\|\bz\|_\bLd \le C_3\|\bz\|_\bHo$ it follows
\begin{align*}
&\nu\|\bz(t)\|^2_\bHo - b(\bz(t),\bz(t),\by_\infty(t)) \ge \Big(\nu - \frac{C_1C_2M_p\|\bef\|_{\bWpc'}}{1 - C_2M_p\|\bef\|_{\bWpc'}}\Big)\|\bz\|^2_\bHo\\
& \ge \frac{1}{C_3^2}\Big(\nu - \frac{C_1C_2M_p\|\bef\|_{\bWpc'}}{1 - C_2M_p\|\bef\|_{\bWpc'}}\Big)\|\bz\|^2_\bLd.
\end{align*}
Taking
\[
\alpha = \frac{1}{C_3^2}\Big(\nu - \frac{C_1C_2M_p\|\bef\|_{\bWpc'}}{1 - C_2M_p\|\bef\|_{\bWpc'}}\Big),
\]
we deduce from the assumption $\|\bef\|_\bWmop < C_\infty$ that $\alpha > 0$. Moreover, from \eqref{E6.10} we obtain
\[
\frac{d}{dt}\|\bz(t)\|^2_\bLd + 2\alpha\|\bz(t)\|^2_\bLd \le 0,\ \text{ for a.a. } t \ge 0.
\]
Applying Gronwall's lemma to this inequality we deduce \eqref{E6.8}.

\end{proof}

\section{Appendix}
\label{SA}

\begin{proof}[Proof of Lemma \ref{L2.1}]
Let $\by_1, \by_2 \in \bY$ and $(\by_{i,H},\by_{i,W}) \in [L^2(I;\bV) \cap L^\infty(I;\bH)] \times L^q(I;\bWp)$ be elements such that $\by_i = \by_{i,H} + \by_{i,W}$ for $i = 1, 2$. Then, we are going to prove estimates for the terms $B(\by_{1,H},\by_{2,H})$, $B(\by_{1,H},\by_{2,W})$, $B(\by_{1,W},\by_{2,H})$, and $B(\by_{1,W},\by_{2,W})$. Given $\bpsi \in \bHo$, we observe that
\[
\langle B(\by_1,\by_2),\bpsi\rangle_{\bHmo,\bHo} = \sum_{i, j = 1}^2\int_\Omega \by_{1,i}(x,t)\partial_{x_i} \by_{2,j}(x,t)\bpsi_j(x)\, dx.
\]
To deduce the estimates we will use the Gagliardo inequality
\begin{equation}
\|\by\|_{\mathbf{L}^r(\Omega)} \le C_r\|\by\|_\bLd^{\frac{2}{r}}\|\by\|_\bHo^{\frac{r-2}{r}}\ \ \forall r \in (2,\infty) \text{ and } \forall \by \in \bHo;
\label{E7.1}
\end{equation}
see \cite[page 313]{Brezis2011}. Now, we proceed in four steps.

{\em Step 1.-} Using that $\div{\by_{1,H}} = 0$, we know that
\begin{equation}
\int_\Omega[(\by_{1,H}\cdot\nabla)\by_{2,H}]\bpsi\, dx = -\int_\Omega[(\by_{1,H}\cdot\nabla)\bpsi]\by_{2,H}\, dx.
\label{E7.2}
\end{equation}
Then, from Schwarz's inequality and \eqref{E7.1} with $r = 4$ it follows
\begin{align}
&\left(\int_0^T|\langle B(\by_{1,H}(t),\by_{2,H}(t)),\bpsi\rangle|^2\, dt\right)^{\frac{1}{2}} = \left(\int_0^T|\langle B(\by_{1,H}(t),\bpsi),\by_{2,H}(t)\rangle|^2\, dt\right)^{\frac{1}{2}}\notag\\
&\le\left(\int_0^T\|\by_{1,H}(t)\|^2_\bLf\|\by_{2,H}(t)\|^2_\bLf\, dt\right)^{\frac{1}{2}}\|\bpsi\|_\bHo\notag\\
&\le C_4^2\left(\int_0^T\|\by_{1,H}(t)\|_\bLd\|\by_{1,H}(t)\|_\bHo\|\by_{2,H}(t)\|_\bLd\|\by_{2,H}(t)\|_\bHo\, dt\right)^{\frac{1}{2}}\|\bpsi\|_\bHo\notag\\
&\le C_4^2\|\by_{1,H}\|^{\frac{1}{2}}_{L^\infty(I;\bLd)}\|\by_{1,H}\|^{\frac{1}{2}}_{L^2(I;\bHo)}\|\by_{2,H}\|^{\frac{1}{2}}_{L^\infty(I;\bLd)}\|\by_{2,H}\|^{\frac{1}{2}}_{L^2(I;\bHo)}\|\bpsi\|_\bHo\notag\\
&\le\frac{C_4^2}{4}\Big(\|\by_{1,H}\|_{L^\infty(I;\bLd)} + \|\by_{1,H}\|_{L^2(I;\bHo)}\Big)\notag\\
&\quad \times \Big(\|\by_{2,H}\|_{L^\infty(I;\bLd)} + \|\by_{2,H}\|_{L^2(I;\bHo)}\Big)\|\bpsi\|_\bHo. \label{E7.3}
\end{align}

\vspace{2mm}

{\em Step 2.-} Using H\"older's inequality and \eqref{E7.1} with $r = 2p' = \frac{2p}{p-1}$ we get
\begin{align*}
&\left(\int_0^T|\langle B(\by_{1,H}(t),\by_{2,W}(t)),\bpsi\rangle|^2\, dt\right)^{\frac{1}{2}}\\
&\le \left(\int_0^T\|\by_{1,H}(t)\|^2_{\bL^{2p'}(\Omega)}\|\bpsi\|^2_{\bL^{2p'}(\Omega)}\|\by_{2,W}(t)\|^2_{\bWop}\, dt\right)^{\frac{1}{2}}\\
&\le C\left(\int_0^T\|\by_{1,H}(t)\|^{\frac{2}{p'}}_{\bLd}\|\by_{1,H}(t)\|^{\frac{2}{p}}_\bHo\|\by_{2,W}(t)\|^2_{\bWop}\, dt\right)^{\frac{1}{2}}\|\bpsi\|_\bHo.
\end{align*}
Applying H\"older's inequality with $r_1 = \frac{pq}{pq - q - 2p}$, $r_2 = p$, and $r_3 = \frac{q}{2}$ we find
\begin{align*}
&\le C\|\by_{1,H}\|^\frac{1}{p'}_{L^{\frac{2r_1}{p'}}(I;\bLd)}\|\by_{1,H}\|^{\frac{1}{p}}_{L^2(I;\bHo)}\|\by_{2,W}\|_{L^q(I;\bWop)}\|\bpsi\|_\bHo\\
&\le CT^{\frac{1}{2r_1}}\|\by_{1,H}\|^\frac{1}{p'}_{L^\infty(I;\bLd)}\|\by_{1,H}\|^{\frac{1}{p}}_{L^2(I;\bHo)}\|\by_{2,W}\|_{L^q(I;\bWop)}\|\bpsi\|_\bHo,
\end{align*}
and  with Young's inequality we infer
\begin{align}
&\le \frac{CT^{\frac{1}{2r_1}}}{p}\left(\|\by_{1,H}\|_{L^\infty(I;\bLd)} + \|\by_{1,H}\|_{L^2(I;\bHo)}\right)\|\by_{2,W}\|_{L^q(I;\bWop)}\|\bpsi\|_\bHo.
\label{E7.4}
\end{align}
Observe that $q > \frac{2p}{p-1}$ implies $pq - q - 2p > 0$, hence $r_1 > 1$ and the use of H\"older's inequality is correct.

\vspace{2mm}

{\em Step 3.-} Using again H\"older's inequality and \eqref{E7.1} with $r = 4$ we obtain
\begin{align*}
&\left(\int_0^T|\langle B(\by_{1,W}(t),\by_{2,H}(t)),\bpsi\rangle|^2\, dt\right)^{\frac{1}{2}}\\
&\le \left(\int_0^T\|\by_{1,W}(t)\|^2_\bLf\|\by_{2,H}(t)\|^2_\bLf\, dt\right)^{\frac{1}{2}}\|\bpsi\|_\bHo\\
&\le C_4^2\left(\int_0^T\|\by_{1,W}(t)\|^2_\bWop\|\by_{2,H}(t)\|_\bLd\|\by_{2,H}(t)\|_\bHo\, dt\right)^{\frac{1}{2}}\|\bpsi\|_\bHo,
\end{align*}
applying H\"older's inequality with $r_1 = \frac{q}{2}$, $r_2 = \frac{2q}{q - 4}$, and $r_3 = 2$ we infer
\begin{align}
&\le C_4^2\|\by_{1,W}\|_{L^q(I;\bWop)}\|\by_{2,H}\|^{\frac{1}{2}}_{L^{\frac{2q}{q - 4}}(I;\bLd)}\|\by_{2,H}\|^{\frac{1}{2}}_{L^2(I;\bHo)}\|\bpsi\|_\bHo\notag\\
&\le C_4^2T^{\frac{q - 4}{4q}}\|\by_{1,W}\|_{L^q(I;\bWop)}\|\by_{2,H}\|^{\frac{1}{2}}_{L^\infty(I;\bLd)}\|\by_{2,H}\|^{\frac{1}{2}}_{L^2(I;\bHo)}\|\bpsi\|_\bHo\notag\\
&\le \frac{C_4^2T^{\frac{q - 4}{4q}}}{2}\|\by_{1,W}\|_{L^q(I;\bWop)}\Big(\|\by_{2,H}\|_{L^\infty(I;\bLd)} + \|\by_{2,H}\|_{L^2(I;\bHo)}\Big)\|\bpsi\|_\bHo. \label{E7.5}
\end{align}

\vspace{2mm}

{\em Step 4.-} Using again the property \eqref{E7.2}, H\"older's inequality, the embedding $\bWop \subset \bLf$, due to $p \ge \frac{4}{3}$, and the fact that $q > 4$ we obtain
\begin{align}
&\left(\int_0^T|\langle B(\by_{1,W}(t),\by_{2,W}(t)),\bpsi\rangle|^2\, dt\right)^{\frac{1}{2}}\notag\\
&\le \left(\int_0^T\|\by_{1,W}\|^2_{\bLf}\|\by_{2,W}\|^2_{\bLf}dt\right)^{\frac{1}{2}}\|\bpsi\|_\bHo\notag\\
&\le C\|\by_{1,W}\|_{L^4(I;\bWop)}\|\by_{2,W}\|_{L^4(I;\bWop)}\|\bpsi\|_\bHo\notag\\
&\le CT^{\frac{2(q - 4)}{q}}\|\by_{1,W}\|_{L^q(I;\bWop)}\|\by_{2,W}\|_{L^q(I;\bWop)}\|\bpsi\|_\bHo. \label{E7.6}
\end{align}

Finally, adding the  estimates  \eqref{E7.3}-\eqref{E7.6} we obtain
\begin{align}
&\|B(\by_1,\by_2)\|_{L^2(I;\bHmo)} \le C\Big(\|\by_{1,H}\|_{L^\infty(I;\bLd)} + \|\by_{1,H}\|_{L^2(I;\bHo)} + \|\by_{1,W}\|_{L^q(I;\bWop)}\Big)\notag\\
&\qquad \times\ \Big(\|\by_{2,H}\|_{L^\infty(I;\bLd)} + \|\by_{2,H}\|_{L^2(I;\bHo)} + \|\by_{2,W}\|_{L^q(I;\bWop)}\Big).\label{E7.7}
\end{align}
Taking the infimum on the right hand side of the above inequality among all functions $(\by_{i,H},\by_{i,W}) \in [L^2(I;\bHo) \cap L^\infty(0,T;\bLd)] \times L^q(0,T;\bWop)$ satisfying that $\by_i = \by_{i,H} + \by_{i,W}$, $i = 1, 2$, we conclude
\[
\|B(\by_1,\by_2)\|_{L^2(I;\bHmo)} \le C'\|\by_1\|_\bY\|\by_2\|_\bY.
\]
\end{proof}

\subsection*{Acknowledgement}
The authors are indebted to J.~Rehberg for pointing out \textit{the square root technique} to prove the maximal regularity of the Stokes system, and to M.~Hieber and J.~Saal for pointing at their book-chapter \cite{HS2018}. We are also indebted to Jean-Pierre Raymond for important hints towards the proof of Theorem \ref{T2.6}.

\end{document}